\newtheorem{theorem}{Theorem}[section]
\newtheorem{lemma}[theorem]{Lemma}
\newtheorem{proposition}[theorem]{Proposition}
\newtheorem{corollary}[theorem]{Corollary}
\newtheorem*{corollary*}{Corollary}
\newtheorem{atheorem}{Theorem}
\newtheorem{acorollary}[atheorem]{Corollary}
\theoremstyle{definition}
\newtheorem{definition}[theorem]{Definition}
\newtheorem{convention}[theorem]{Convention}
\theoremstyle{remark}
\newtheorem{remark}[theorem]{Remark}
\newtheorem{warning}[theorem]{Warning}
\newtheorem{question}[theorem]{Question}
\newcommand{\cat}[1]{{\mathsf{#1}}}
\newcommand{\icat}[1]{{\mathscr{#1}}}
\renewcommand{\rm}[1]{{\mathrm{#1}}}
\newcommand{\lra}{\longrightarrow}
\newcommand{\bb}[1]{{\mathbb{#1}}}
\newcommand{\cal}[1]{{\mathcal{#1}}}
\newcommand{\Map}{\rm{Map}}
\newcommand{\hAut}{\rm{hAut}}
\newcommand{\Homeo}{\rm{Homeo}}
\newcommand{\Diff}{\rm{Diff}}
\newcommand{\Aut}{\rm{Aut}}
\newcommand{\Hom}{\rm{Hom}}
\newcommand{\fin}{\rm{fin}}
\newcommand{\dgLie}{{\rm{dg}\icat{L}\rm{ie}}}
\newcommand{\dgCom}{{\rm{dg}\icat{C}\rm{om}}}
\newcommand{\aug}{\rm{aug}}
\newcommand{\Ch}{{\icat{C}\rm{h}}}
\DeclareFontFamily{U}{min}{}
\DeclareFontShape{U}{min}{m}{n}{<-> udmj30}{}
	\def\MR#1{}
\title{Mapping class groups of manifolds with boundary are of finite type}
\author{Alexander Kupers}
\address{Department of Computer and Mathematical Sciences, University of Toronto Scarborough, 1265 Military Trail, Toronto, ON M1C 1A4, Canada}
\email{a.kupers@utoronto.ca}
\date{\today}
\begin{document}
	
\begin{abstract}In this note we prove that the mapping class group of a compact topological manifold $M$ with boundary is of finite type, under assumptions on its dimension and connectivity.
\end{abstract}
	
\maketitle

\section{Introduction}

\begin{definition}\qquad
	\begin{itemize}
		\item We say that a space $X$ is of \emph{finite type} if it is homotopy equivalent to a CW-complex with finitely many cells in each dimension.
		\item We say that a group $G$ is of \emph{finite type} (equivalently, is of \emph{type $F_\infty$}) if its classifying space $BG$ is of finite type.
	\end{itemize}
\end{definition}

For compact topological manifold $M$ with boundary $\partial M$, let $\Homeo_\partial(M)$ denote the topological group of homeomorphisms of $M$ fixing a neighbourhood of the boundary pointwise, in the compact-open topology. The \emph{mapping class group} of $M$ is its group $\pi_0\,\Homeo_\partial(M)$ of path components.

\begin{atheorem}\label{athm:main} If $M$ is a compact topological manifold of dimension $\geq 5$, and both $M$ and all path components of $\partial M$ are $1$-connected, then the group $\pi_0\,\rm{Homeo}_\partial(M)$ is of finite type.
\end{atheorem}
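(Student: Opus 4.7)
The plan is to compare $\pi_0\,\Homeo_\partial(M)$ with the path-component group $\pi_0\,\hAut_\partial(M)$ of the topological monoid of self-homotopy equivalences of $M$ rel $\partial M$, invoking Sullivan's theorem to handle the homotopy-theoretic side, and using topological surgery theory together with topological pseudoisotopy to control the discrepancy. Throughout I use that the class of $F_\infty$ (i.e.\ finite-type) groups is closed under extensions and under passage to finite-index subgroups and overgroups.

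First I would show that $\pi_0\,\hAut_\partial(M)$ is itself of finite type. Since $M$ and every component of $\partial M$ are simply-connected finite CW complexes, Sullivan's theorem in its relative form identifies $\pi_0\,\hAut_\partial(M)$, up to a finite kernel and a finite-index image, with an arithmetic subgroup of the $\mathbb{Q}$-points of a linear algebraic $\mathbb{Q}$-group, namely the automorphism group of the relative Sullivan minimal model of $(M,\partial M)$. By Borel--Serre arithmetic groups are of finite type, so $\pi_0\,\hAut_\partial(M)$ is of finite type by commensurability.

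Next I would factor the natural comparison map through block homeomorphisms as
$$\pi_0\,\Homeo_\partial(M) \xrightarrow{\alpha} \pi_0\,\widetilde{\Homeo}_\partial(M) \xrightarrow{\beta} \pi_0\,\hAut_\partial(M).$$
The topological pseudoisotopy theorem in the range $\dim M \geq 5$ for simply-connected $M$ (Burghelea--Lashof, topological Weiss--Williams) gives that the topological concordance space $C^{\mathrm{TOP}}(M)$ has vanishing $\pi_0$, and hence $\alpha$ is an isomorphism. The map $\beta$ is controlled by the topological surgery exact sequence of Kirby--Siebenmann, Wall, Quinn, and Ranicki: $\ker\beta$ and $\mathrm{coker}\,\beta$ are subquotients of the homotopy groups of the topological structure space $\mathcal{S}^{\mathrm{TOP}}(M,\partial M)$, which fits in surgery exact sequences with the simply-connected $L$-groups $L_*(\mathbb{Z})$ and the normal invariant groups $[M/\partial M,\, G/\mathrm{TOP}]$. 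Both are finitely generated abelian --- the $L$-groups are $4$-periodic with small pieces $\mathbb{Z},0,\mathbb{Z}/2,0$, and the normal invariants are finitely generated because $M/\partial M$ is a finite CW complex and $\pi_*(G/\mathrm{TOP})$ is finitely generated.

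Assembling the pieces: $\mathrm{im}\,\beta$ is a finite-index subgroup of the $F_\infty$ group $\pi_0\,\hAut_\partial(M)$, hence $F_\infty$; then $\pi_0\,\widetilde{\Homeo}_\partial(M)$ is an extension of $\mathrm{im}\,\beta$ by the finitely generated abelian $\ker\beta$, hence $F_\infty$; and finally $\pi_0\,\Homeo_\partial(M) \cong \pi_0\,\widetilde{\Homeo}_\partial(M)$ via $\alpha$. The main obstacle will be carefully setting up the relative Sullivan theorem for a pair whose boundary is allowed to be disconnected but has simply-connected components, and assembling the topological pseudoisotopy and surgery inputs in the dimension $\geq 5$ range with enough care to pin down which subquotient of the structure space actually receives $\ker\beta$ and $\mathrm{coker}\,\beta$.
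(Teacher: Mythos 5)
Your outline follows the same general route as the paper (rel-boundary homotopy automorphisms via rational homotopy theory, plus topological pseudoisotopy and surgery), but it has a genuine gap at the surgery step. You assert that $\mathrm{im}\,\beta$ has finite index in $\pi_0\,\hAut_\partial(M)$ because the $L$-groups and normal invariants are finitely generated. That does not follow, and it is false in general: what surgery theory gives (cf.\ \cref{thm:surgery}) is that the image of the (block) homeomorphisms has finite index in the \emph{stabiliser} of the classifying map $\tau^s_M \colon M \to B\rm{Top}$ rel boundary, equivalently, up to finite index, the subgroup of $\pi_0\,\hAut_\partial(M)$ fixing the rational Pontryagin classes in $H^*(M,\partial M;\bb{Q})$. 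Since the normal invariants $[M/\partial_0 M, G/\rm{Top}]$ typically contain free abelian summands, the orbit of the basepoint of the structure set can be infinite, so this stabiliser can have infinite index; ``$\mathrm{coker}\,\beta$'' is only a coset space embedded in the structure set, not a finitely generated abelian group you may feed into an extension argument. Moreover, an infinite-index subgroup of a finite-type (even arithmetic) group need not be of finite type, so knowing $\pi_0\,\hAut_\partial(M)$ is of finite type does not finish the proof. The paper closes exactly this gap by showing that the algebraic-group action of $\pi_0\,\hAut^{\partial M_\bb{Q}/}(M_\bb{Q})$ on $H^*(M,\partial M;\bb{Q})$ is algebraic (\cref{prop:action-algebraic}), so the stabiliser of the Pontryagin classes is a closed algebraic subgroup, whence the stabiliser in $\pi_0\,\hAut_\partial(M)$ maps to it with finite kernel and arithmetic image (\cref{cor:haut-fixing-cohomology}, using \cref{lem:arithm-constructions} and \cref{cor:borel-lemma}); arithmeticity of that \emph{subgroup}, not of the full group, is what the argument needs.

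Two further points. First, the ``relative form of Sullivan's theorem'' you invoke is not an off-the-shelf result: it is the recent Espic--Saleh theorem on relative homotopy automorphisms (earlier claims by Scheerer and Maruyama have few details), and it is the crucial external input of the paper. Second, that result requires the subspace to be $1$-connected, whereas $\partial M$ is in general disconnected; you flag this as an obstacle but do not resolve it, and it cannot be fixed purely on the rational-homotopy side in any obvious way. The paper instead reduces to the connected-boundary case (\cref{thm:ctd-case}) geometrically, by induction on the number of boundary components: one cuts out an embedded $D^{d-1}\times I$ joining two components, using general position, topological immersion theory, isotopy extension, and finiteness of $\pi_i\,\rm{Top}(d)$ in low degrees, and then uses the fibration $\Homeo_\partial(M') \to \Homeo_\partial(M) \to \rm{Emb}^\rm{Top}_\partial(D^{d-1}\times I, M)$ together with closure properties of finite-type groups. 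Without an argument of this kind (or a genuinely relative theory for disconnected $1$-connected subspaces), your proposal does not yet cover the stated generality.
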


\begin{remark}\label{rem:mainthm} \quad
	\begin{enumerate}[(i)]
	\item This paper crucially uses \cite{EspicSaleh}, which studies relative homotopy automorphisms. Apart from this, the arguments are standard with the exception of \cref{cor:borel-lemma}.
	\item \label{enum:sullivan} For \emph{closed} $1$-connected topological manifolds of dimension $\geq 5$, \cref{athm:main} is a consequence of a stronger result of Sullivan \cite[Theorem 13.3]{Sullivan}, who proved $\pi_0\, \Homeo(M)$ is commensurable up to finite kernel with an arithmetic group (he only did the case $d\geq 6$ but nowadays we know $d\geq 5$ suffices, see the addendum to \cite[Theorem 10.3]{Wall}). Here \emph{commensurability up to finite kernel} is the equivalence relation on groups generated by isomorphism, passing to finite index subgroups, and taking quotients by finite normal subgroups.
	\item \label{enum:stronger} If $\partial M$ is connected, we prove the stronger statement that $\pi_0\,\Homeo_\partial(M)$ is commensurable up to finite kernel to an extension of an arithmetic group by a finitely generated nilpotent group; see \cref{thm:ctd-case}.
	\item \label{enum:connectivity} We can drop the assumption that the path components of $\partial M$ are 1-connected at the expense of additional conditions on $M$; see \cref{cor:nontrivial-pi1}.
	\item \label{enum:smooth} If $M$ admits a smooth or PL structure, the analogue of \cref{athm:main} is true for diffeomorphisms or PL homeomorphisms fixing a neighbourhood of the boundary pointwise.
	\item \label{enum:dim4} \cref{athm:main} is also true in dimension $d \leq 4$; the only non-classical case is $d=4$ in which the result follows from \cite[Theorem 1.1]{Quinn}. However, we do \emph{not} know whether it holds for diffeomorphisms or PL homeomorphisms when $d=4$.
	\end{enumerate}
\end{remark}

\cref{rem:mainthm} \eqref{enum:connectivity}, \eqref{enum:smooth}, and \eqref{enum:dim4} will be explained in \cref{sec:remarks}. Combining \cref{athm:main} with work of Bustamante, Krannich, and author (in particular \cite[Remark 5.2]{BustamanteKrannichKupers}) we obtain the analogue of \cite[Theorem A]{BustamanteKrannichKupers} for certain manifolds with boundary:

\begin{acorollary}\label{acor:finite-type} If $M$ is a compact smooth manifold of dimension $2n \geq 6$, and both $M$ and all path components of $\partial M$ are $1$-connected, then $B\Homeo_\partial(M)$ and all of its homotopy groups are of finite type.\end{acorollary}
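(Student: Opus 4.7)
The plan is short: I would invoke \cite[Remark 5.2]{BustamanteKrannichKupers}, which provides the desired conclusion for compact manifolds with boundary under the stated dimensional and connectivity hypotheses, taking as input the finite-type property of the mapping class group $\pi_0\,\Homeo_\partial(M)$. That input is supplied by \cref{athm:main}.

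Unpacking what I expect that remark to do, I would analyze the classifying space via the fibration
\[
B\Homeo_\partial(M)_\circ \lra B\Homeo_\partial(M) \lra B\pi_0\,\Homeo_\partial(M),
\]
where $\Homeo_\partial(M)_\circ$ denotes the identity component. The base is of finite type by \cref{athm:main}. The fiber $B\Homeo_\partial(M)_\circ$ is simply-connected, so its being of finite type is equivalent to finite generation of $\pi_i\,\Homeo_\partial(M)$ for each $i \geq 1$. This is the homotopical input provided in the closed case by surgery, smoothing and concordance theory in \cite{BustamanteKrannichKupers}, and Remark~5.2 there is the parenthetical note that the same strategy applies in the relative setting once the mapping class group is itself known to be of finite type. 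The two inputs then combine through the Serre spectral sequence of the fibration to yield finite type of the total space $B\Homeo_\partial(M)$.

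The conclusion about the homotopy groups is then automatic: $\pi_1\,B\Homeo_\partial(M) = \pi_0\,\Homeo_\partial(M)$ is of finite type as a group by \cref{athm:main}, and each higher $\pi_i\,B\Homeo_\partial(M) = \pi_{i-1}\,\Homeo_\partial(M)$ is a finitely generated abelian group, for which being of finite type is immediate (as $K(A,n)$ admits a finite-type CW model for $A$ finitely generated abelian).

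The main obstacle I anticipate is purely a matter of bookkeeping: checking that the formulation of \cite[Remark 5.2]{BustamanteKrannichKupers} really applies under the precise dimensional and connectivity hypotheses of \cref{acor:finite-type}, and that the surgery-theoretic and concordance-theoretic finiteness inputs of \cite{BustamanteKrannichKupers} go through in the relative case with boundary. Modulo this verification, the proof is a direct citation.
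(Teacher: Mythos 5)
Your proposal matches the paper's own proof, which is exactly the one-line combination of \cref{athm:main} with \cite[Remark 5.2]{BustamanteKrannichKupers}: that remark supplies the finiteness of the higher homotopy groups (and the passage to finite type of the classifying space) in the relative setting, with the finite-type mapping class group as the missing input now provided by \cref{athm:main}. Your unpacking via the fibration over $B\pi_0\,\Homeo_\partial(M)$ is consistent with what that citation does, so the argument is correct and essentially identical to the paper's.
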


\begin{remark}\qquad
	\begin{enumerate}[(i)]
		\item As before, the same is true for diffeomorphisms and PL homeomorphisms fixing a neighbourhood of the boundary pointwise.
		\item The condition that $M$ is smooth is due to the fact that \cite{BustamanteKrannichKupers} relies on convergence of embedding calculus, which is only known for smoothable manifolds.
	\end{enumerate}
\end{remark}

\subsection{Open questions}
Two questions remain. Firstly, Sullivan's result mentioned in \cref{rem:mainthm} \eqref{enum:sullivan} suggests the following:

\begin{question}\label{ques:arithmetic}Under the assumptions of \cref{athm:main}, is $\pi_0\,\rm{Homeo}_\partial(M)$ commensurable up to finite kernel with an arithmetic group?\end{question}

Secondly, Triantafillou extended Sullivan's result to closed smooth manifolds of even dimensions $\geq 6$ with \emph{finite} fundamental group \cite{TriantafillouFinite}; her proof contains some flaws, which are fixed in \cite[Section 2.2]{BustamanteKrannichKupers}. Combined with \cref{rem:mainthm} \eqref{enum:connectivity}, this suggests the following:

\begin{question}Do \cref{athm:main} and \cref{acor:finite-type} remain true when we assume that $M$ is $0$-connected with finite fundamental group and make no assumptions on $\partial M$?\end{question}
		
\subsubsection*{Acknowledgements} AK would like to thank Manuel Krannich and Mark Powell for helpful conversations, Hadrien Espic and Bashar Saleh for answering my questions, and the anonymous referee for many comments and suggestions. AK acknowledges the support of the Natural Sciences and Engineering Research Council of Canada (NSERC) [funding reference number 512156 and 512250], as well as the Research Competitiveness Fund of the University of Toronto at Scarborough and the Alfred J.~Sloan Research Fellowship.

\tableofcontents

\section{Linear algebraic groups and arithmetic groups} This section contains results regarding linear algebraic and arithmetic groups necessary for the proof of \cref{athm:main}. As the reader may not be familiar with these results, we give more background than strictly necessary (the reader may find the summary on \cite[p.\,294--296]{Sullivan} helpful as well).

\subsection{Linear algebraic groups}
Our definitions are as in \cite[Chapter I]{Borel}. An \emph{algebraic $\bb{Q}$-group} is a group object in the category whose objects are varieties over $\bb{Q}$ and whose morphisms are morphisms of varieties defined over $\bb{Q}$. A \emph{morphism of linear algebraic groups over $\bb{Q}$} is a morphism of such group objects. The prototypical example is $\rm{GL}_V$ for $V$ a finite-dimensional $\bb{Q}$-vector space; we write $\rm{GL}_n$ when $V = \bb{Q}^n$. A \emph{closed subgroup} of an algebraic $\bb{Q}$-group is a (Zariski) closed subvariety that is an algebraic $\bb{Q}$-group with the restricted operations.

\begin{definition}A \emph{linear algebraic group over $\bb{Q}$} is an algebraic $\bb{Q}$-group $G$ over $\bb{Q}$ isomorphic to a closed subgroup of $\rm{GL}_V$.\end{definition}

Concretely, this means $G$ can be described as the vanishing locus in $\rm{GL}_n$ of finitely many polynomials with coefficients in $\bb{Q}$ in the entries of the matrix; this is how we will often construct linear algebraic groups. In fact, every affine algebraic $\bb{Q}$-group is of this form \cite[Proposition I.1.10]{Borel}. 

\begin{convention}\label{conv:q}
	All our linear algebraic groups and morphisms of linear algebraic groups will be defined over $\bb{Q}$, so we will forego mentioning this.
\end{convention}

We will need to know that the following constructions yield linear algebraic groups, clear from the definitions unless mentioned otherwise.

\begin{lemma}\label{lem:alg-grp-constr} The following constructions yield linear algebraic groups:
	\begin{enumerate}[\noindent (1)]
		\item \label{enum:intersection-alg-subgrp} Intersections of finitely many closed subgroups.
		\item \label{enum:kernels-alg} Kernels of morphisms of linear algebraic groups.
		\item \label{enum:images-alg} Images of morphisms of linear algebraic groups, by \cite[Corollary I.6.9]{Borel}.
		\item \label{enum:quotients-alg} Quotients by normal closed subgroups, by \cite[Theorem I.6.8]{Borel}.
		\item \label{enum:stabilisers-alg} Stabilisers of an element in an algebraic representation as in \cref{def:alg-action}.
	\end{enumerate}
\end{lemma}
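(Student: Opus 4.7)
The plan is to treat each of the five clauses in turn, leveraging the fact that the defining property of a linear algebraic group is that it sits as a Zariski-closed subgroup of some $\rm{GL}_V$. Clauses \eqref{enum:images-alg} and \eqref{enum:quotients-alg} are cited verbatim from Borel, so I would not reprove them; the remaining three should each reduce to a one-line verification.

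For \eqref{enum:intersection-alg-subgrp}, I would fix an embedding $G \subset \rm{GL}_V$ of the ambient linear algebraic group; each closed subgroup $H_i \subset G$ is then closed in $\rm{GL}_V$, so the finite intersection $\bigcap_i H_i$ is Zariski-closed in $\rm{GL}_V$, and it is plainly a subgroup. For \eqref{enum:kernels-alg}, given a morphism $\varphi : G \to H$ of linear algebraic groups, the identity $e_H \in H$ is a closed point, so $\ker(\varphi) = \varphi^{-1}(e_H)$ is closed in $G$ and hence closed in any $\rm{GL}_V$ containing $G$ as a closed subgroup; the subgroup structure is automatic.

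For \eqref{enum:stabilisers-alg}, once \cref{def:alg-action} is in place and tells us that an algebraic action $G \times X \to X$ is in particular a morphism of varieties, I would form the orbit map $o_x : G \to X$, $g \mapsto g \cdot x$, by precomposing the action with the inclusion $G \cong G \times \{x\} \hookrightarrow G \times X$. Then $o_x^{-1}(\{x\})$ is closed in $G$ and evidently a subgroup. The only bookkeeping issue recurring across all three arguments is checking that a closed subgroup of a linear algebraic group is again linear; this is immediate, since a closed subgroup of a closed subgroup of $\rm{GL}_V$ is itself a closed subgroup of $\rm{GL}_V$. I do not expect any genuine obstacle; the whole lemma is really a catalogue of elementary consequences of the definitions, plus the two nontrivial citations to Borel for images and quotients.
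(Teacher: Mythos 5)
Your proposal is correct and matches the paper's treatment: the paper gives no proof at all, declaring items (1), (2), (5) ``clear from the definitions'' and citing Borel for (3) and (4), exactly as you do. Your spelled-out verifications (closedness of finite intersections, of $\varphi^{-1}(e_H)$, and of the fibre of the orbit map over the chosen ($\bb{Q}$-rational) element, plus the observation that a closed subgroup of a closed subgroup of $\rm{GL}_V$ is again closed in $\rm{GL}_V$) are precisely the routine checks being elided.
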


\begin{definition}\label{def:alg-action} An \emph{algebraic representation} of a linear algebraic group $G$ on a finite-dimensional $\bb{Q}$-vector space $V$ is a morphism $G \to \rm{GL}_V$ of linear algebraic groups.\end{definition}

We will also refer to this as an \emph{algebraic action}. Equivalently, the adjoint maps $G \times V \to V$ satisfy the usual equations for an action and are morphisms of varieties. We will need that the following constructions yield algebraic representations, all clear from the definitions:

\begin{lemma}\label{lem:alg-act-constructions} The following constructions yield algebraic representations:
	\begin{enumerate}[\noindent (1)]
		\item \label{enum:alg-rep-sub-quot} Subrepresentations and quotients of algebraic representations, in the following sense: if $V$ is an algebraic representation and $W \subseteq V$ is preserved by the action, then $W$ and $V/W$ are also algebraic representations.
		\item \label{enum:alg-rep-tensor} Tensor products, exterior powers, and symmetric powers of algebraic representations.
		\item \label{enum:alg-rep-dual} Linear duals of algebraic representations.
	\end{enumerate}
\end{lemma}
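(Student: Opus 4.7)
The plan is to exhibit for each construction an explicit morphism of linear algebraic groups $\rm{GL}_V \to \rm{GL}_{?}$ (or $\rm{GL}_V \times \rm{GL}_W \to \rm{GL}_{V \otimes W}$) and then precompose with the given algebraic representation $\rho \colon G \to \rm{GL}_V$ (and $\rho \circ \Delta$ via the diagonal $\Delta \colon G \to G\times G$ in the binary case). Since compositions of morphisms of linear algebraic groups are morphisms of linear algebraic groups, this reduces everything to checking that the relevant universal map between general linear groups is algebraic, which in each case amounts to writing down polynomial formulas for matrix entries.

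For \eqref{enum:alg-rep-sub-quot}, choose a $\bb{Q}$-linear splitting $V = W \oplus W'$. The subgroup $H \subseteq \rm{GL}_V$ preserving $W$ is cut out by the vanishing of the $W' \to W$ block entries of a matrix, hence is a closed subgroup, and the projections to the diagonal blocks yield morphisms $H \to \rm{GL}_W$ and $H \to \rm{GL}_{V/W}$ given by polynomials in the matrix entries. Since $\rho$ factors through $H$ by hypothesis (and a morphism of varieties landing in a closed subvariety is automatically a morphism to that subvariety), the composites $G \to H \to \rm{GL}_W$ and $G \to H \to \rm{GL}_{V/W}$ are morphisms of linear algebraic groups. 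For \eqref{enum:alg-rep-tensor}, with respect to bases the matrix entries of $A \otimes B$ (resp.\ $\Lambda^k A$, $S^k A$) are polynomials (in fact monomials of bounded degree, resp.\ minors and permanents) in the entries of $A$ and $B$, so the maps $\rm{GL}_V \times \rm{GL}_W \to \rm{GL}_{V \otimes W}$ and $\rm{GL}_V \to \rm{GL}_{\Lambda^k V}, \rm{GL}_{S^k V}$ are morphisms of algebraic groups; composing with $\rho$ (and $\Delta$ where appropriate) yields the desired algebraic action. For \eqref{enum:alg-rep-dual}, the map $\rm{GL}_V \to \rm{GL}_{V^*}$ given by $A \mapsto (A^{-1})^T$ is a morphism because inversion is part of the algebraic group structure on $\rm{GL}_V$ and transposition is linear in the matrix entries.

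No single step is an obstacle; the entire lemma is bookkeeping and is the reason the author writes \emph{clear from the definitions}. The only mildly nontrivial point is \eqref{enum:alg-rep-sub-quot}, where one must observe that the stabiliser of a subspace is closed (which could alternatively be deduced from \cref{lem:alg-grp-constr}\eqref{enum:stabilisers-alg} applied to the induced action on an appropriate exterior power, using \eqref{enum:alg-rep-tensor}) and that the quotient action is algebraic, which again follows from the polynomial nature of the block decomposition.
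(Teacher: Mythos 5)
Your proposal is correct and is exactly the routine verification the paper has in mind when it declares these constructions ``all clear from the definitions'': explicit polynomial formulas for the induced maps between general linear groups, composed with the given morphism $G \to \rm{GL}_V$ (and the diagonal for tensor products). The only nitpick is the labelling in part \eqref{enum:alg-rep-sub-quot}: preservation of $W$ means the block recording the $W'$-component of the image of $W$ vanishes, but which block you call this is a matter of convention and does not affect the argument.
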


Similarly, we can define an action of one algebraic group $G$ on another algebraic group $H$ by group automorphisms. In that case there is another construction of linear algebraic groups \cite[Section I.1.11]{Borel}:

\begin{lemma}\label{lem:semi-direct} Given an action of a linear algebraic group $G$ on a linear algebraic group $H$, then there is a linear algebraic group $G \ltimes H$ so that there is a split exact sequence
	\[1 \lra H \lra G \ltimes H \lra G \lra 1\]
with induced action of $G$ on $H$ as given.
\end{lemma}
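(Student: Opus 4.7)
The plan is to construct $G \ltimes H$ as the product variety $G \times H$ equipped with the standard semi-direct product group law, and then to observe that linearity is automatic once we check we have an affine algebraic $\bb{Q}$-group.

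First I would define multiplication on $G \times H$ by
\[
(g_1, h_1)(g_2, h_2) := \bigl(g_1 g_2,\, h_1 \cdot (g_1 \cdot h_2)\bigr),
\]
where $g \cdot h$ denotes the given action of $G$ on $H$, and inversion by $(g, h)^{-1} := (g^{-1},\, g^{-1} \cdot h^{-1})$. Both formulae are compositions of the multiplications and inversions on $G$ and $H$ with the action morphism $G \times H \to H$ (which is a morphism of $\bb{Q}$-varieties since the action is assumed to be algebraic), so they are themselves morphisms of $\bb{Q}$-varieties. Verifying the group axioms for this law is a direct computation using only that $G$ acts on $H$ by group automorphisms; this makes $G \ltimes H := (G \times H, \cdot)$ into an algebraic $\bb{Q}$-group.

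Since $G$ and $H$ are closed subgroups of $\rm{GL}$'s they are affine, so $G \times H$ is affine; hence $G \ltimes H$ is an affine algebraic $\bb{Q}$-group, and by \cite[Proposition I.1.10]{Borel} it is automatically isomorphic to a closed subgroup of some $\rm{GL}_N$ and therefore linear. The split exact sequence is then immediate from the construction: $h \mapsto (e_G, h)$ is a closed embedding of $H$ as a normal subgroup (normality being apparent from the multiplication formula), the first projection is a surjective morphism of algebraic groups with kernel this copy of $H$, and $g \mapsto (g, e_H)$ is a morphism splitting the projection. The main obstacle, such as it is, lies in checking that the semi-direct multiplication is a morphism of varieties, but this is immediate from the algebraicity of the action; everything else is bookkeeping.
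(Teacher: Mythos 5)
Your construction is correct and is exactly the standard one: the paper gives no proof of its own but simply cites \cite[Section I.1.11]{Borel}, which is precisely this product-variety-with-twisted-multiplication construction, with linearity following from affineness via \cite[Proposition I.1.10]{Borel} as you say. No gaps; the group-law, normality, and splitting checks are routine as you indicate.
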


If $G$ is a linear algebraic group, its $\bb{Q}$-points $G(\bb{Q})$ form a group. If $1 \to G \to G' \to G' \to 1$ is a short exact sequence of linear algebraic groups, then the sequence
	\[1 \lra G(\bb{Q}) \lra G'(\bb{Q}) \lra G''(\bb{Q})\]
is exact; we learned this from \cite[Section 9]{Wilkerson} but for proofs see \cite{SerreGalois}. In this note, we will need two situations in which this sequence extends to the right to a short exact sequence. The first is obvious; if the sequence of linear algebraic groups is split exact so is 
\[1 \lra G(\bb{Q}) \lra G'(\bb{Q}) \lra G''(\bb{Q}) \lra 1.\]
The second will be \cref{lem:unipotent-q-points}.

\subsubsection{Arithmetic groups}
If we choose an identification of a linear algebraic group $G$ as a closed subgroup of $\rm{GL}_n$ then we may define
\[G_\bb{Z} \coloneqq G(\bb{Q}) \cap \rm{GL}_n(\bb{Z}).\]
This depends on our choice, but by \cite[Section 6]{BorelHarishChandra} any two choices yield subgroups that are equivalent in the following sense: two subgroups $H,H' \subseteq G$ are \emph{commensurable} if $H \cap H'$ has finite index in both $H$ and $H'$. Thus it makes sense to say that subgroup $A \subset G(\bb{Q})$ is an \emph{arithmetic subgroup} if it is commensurable to $G_\bb{Z}$.

\begin{definition}A group is \emph{arithmetic} if it is isomorphic to an arithmetic subgroup of the $\bb{Q}$-points of a linear algebraic group.\end{definition}

We will need that arithmetic groups are finitely generated \cite[1.3(1)]{Serre}, in fact even of finite type \cite[2.3]{Serre}, and that arithmetic subgroups are preserved by the following constructions:

\begin{lemma}\label{lem:arithm-constructions}The following constructions yield arithmetic subgroups:
	\begin{enumerate}[\noindent (1)]
		\item \label{enum:image-arithm} Images under surjective morphisms of linear algebraic groups, by \cite[1.1]{Serre}: if $\phi \colon G \to G'$ is a surjective morphism of linear algebraic groups and $A \subset G(\bb{Q})$ is an arithmetic subgroup, then so is $\phi(A) \subset G'(\bb{Q})$.
		\item \label{enum:intersection-arithm} Intersections with closed subgroups: if $H \subseteq G$ is a closed subgroup and $A \subset G(\bb{Q})$ is an arithmetic subgroup, then so is $A \cap H(\bb{Q}) \subset H(\bb{Q})$.
	\end{enumerate}
\end{lemma}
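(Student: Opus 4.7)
Both parts are chases through the definition of commensurability, combined with one non-trivial input for (1).

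For part \eqref{enum:image-arithm}, my plan is to reduce to \cite[Corollary 6.11]{BorelHarishChandra} by a two-step commensurability argument. Fix presentations $G \subseteq \rm{GL}_n$ and $G' \subseteq \rm{GL}_m$ so that $G_\bb{Z}$ and $G'_\bb{Z}$ are defined. By assumption $A$ is commensurable with $G_\bb{Z}$, so the image $\phi(A)$ is commensurable with $\phi(G_\bb{Z})$: both $\phi(A \cap G_\bb{Z}) \subseteq \phi(A)$ and $\phi(A \cap G_\bb{Z}) \subseteq \phi(G_\bb{Z})$ have finite index, since surjections of groups send finite-index inclusions to finite-index inclusions. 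Then the cited corollary of Borel--Harish-Chandra says $\phi(G_\bb{Z})$ is commensurable with $G'_\bb{Z}$, which is arithmetic in $G'(\bb{Q})$ by construction. Transitivity of commensurability finishes the argument.

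For part \eqref{enum:intersection-arithm}, my plan is to identify the ``obvious'' arithmetic subgroup of $H(\bb{Q})$ produced by the ambient embedding and then check that $A \cap H(\bb{Q})$ is commensurable with it. Concretely, choose an identification $G \subseteq \rm{GL}_n$; since $H \subseteq G$ is closed, the composite $H \subseteq G \subseteq \rm{GL}_n$ exhibits $H$ as a closed subgroup of $\rm{GL}_n$, so with respect to this choice
\[H_\bb{Z} = H(\bb{Q}) \cap \rm{GL}_n(\bb{Z}) = (G(\bb{Q}) \cap \rm{GL}_n(\bb{Z})) \cap H(\bb{Q}) = G_\bb{Z} \cap H(\bb{Q}).\]
Now $A \cap G_\bb{Z}$ has finite index in both $A$ and $G_\bb{Z}$ by hypothesis. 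Intersecting with the subgroup $H(\bb{Q}) \subseteq G(\bb{Q})$ preserves finite index, so $A \cap G_\bb{Z} \cap H(\bb{Q})$ has finite index in both $A \cap H(\bb{Q})$ and $G_\bb{Z} \cap H(\bb{Q}) = H_\bb{Z}$. Thus $A \cap H(\bb{Q})$ is commensurable with the arithmetic subgroup $H_\bb{Z} \subseteq H(\bb{Q})$, hence is itself arithmetic.

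There is essentially no obstacle; the only non-trivial ingredient is the Borel--Harish-Chandra statement cited in part \eqref{enum:image-arithm}, without which the image of an arithmetic subgroup need not even be commensurable with the ``standard'' integral points of the image (this is where one must use that everything is defined over $\bb{Q}$ rather than over an arbitrary field).
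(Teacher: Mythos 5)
Your proposal is correct and takes essentially the same route as the paper: the paper gives no written proof, deferring part (1) to the cited Borel--Harish-Chandra Corollary 6.11 and treating part (2) as a routine consequence of the definition of commensurability, which is exactly what your commensurability chases (reducing $A$ to $G_\bb{Z}$, and using $H_\bb{Z} = G_\bb{Z} \cap H(\bb{Q})$ for the embedding $H \subseteq G \subseteq \rm{GL}_n$) spell out. No gaps.
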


\subsection{Unipotent groups} \label{sec:unipotent} We shall take \cite[I.4.8 Corollary]{Borel} as the definition of a unipotent linear algebraic group. Let $\rm{U}_n \subset \rm{GL}_n$ be the closed subgroup of matrices with $0$'s below the diagonal and $1$'s on the diagonal.

\begin{definition}
	A linear algebraic group is \emph{unipotent} if it is isomorphic to a closed subgroup of $\rm{U}_n$.
\end{definition}

The $\bb{Q}$-points of unipotent linear algebraic groups have two useful properties. Firstly, they admit a unique such structure up to isomorphism of linear algebraic groups \cite[p.\,295]{Sullivan}, a reference is \cite[Corollary 3.8 (1)]{Chatterjee}. Secondly, the proof of \cite[Theorem 9.5]{Wilkerson} (not affected by \cite{WilkersonErrata}) yields:

\begin{lemma}\label{lem:unipotent-q-points}If $1 \to G \to G' \to G'' \to 1$ is an exact sequence of linear algebraic groups and $G$ is unipotent, then the following sequence of groups is also exact:
	\[1 \lra G(\bb{Q}) \lra G'(\bb{Q}) \lra G''(\bb{Q}) \lra 1.\]
\end{lemma}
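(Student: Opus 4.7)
The left-exactness was already noted in the paragraph just above the lemma, so the only content is the surjectivity of $\pi\colon G'(\bb{Q}) \to G''(\bb{Q})$. My plan is to argue via nonabelian Galois cohomology. For each $g'' \in G''(\bb{Q})$ the scheme-theoretic fibre $\pi^{-1}(g'')$ is a right $G$-torsor defined over $\bb{Q}$, and its class in the pointed set $H^1(\bb{Q}, G) \coloneqq H^1(\rm{Gal}(\ol{\bb{Q}}/\bb{Q}), G(\ol{\bb{Q}}))$ vanishes if and only if the fibre carries a $\bb{Q}$-rational point. It therefore suffices to show $H^1(\bb{Q}, G) = \ast$ for every unipotent linear algebraic group $G$ over $\bb{Q}$.

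To reduce to the additive group $\bb{G}_a$, I would invoke the standard structure theorem for unipotent groups in characteristic zero: such a $G$ admits a filtration $G = G_0 \supset G_1 \supset \cdots \supset G_k = \{1\}$ by closed normal $\bb{Q}$-subgroups with each successive quotient $G_i/G_{i+1}$ isomorphic to $\bb{G}_a$, obtainable by refining the lower central series and using that every commutative unipotent $\bb{Q}$-group is a power of $\bb{G}_a$. Induction on $k$ via the long exact sequence of pointed sets attached to $1 \to G_{i+1} \to G_i \to G_i/G_{i+1} \to 1$ then reduces the problem to the case $G = \bb{G}_a$, for which additive Hilbert~90 yields $H^1(\bb{Q}, \bb{G}_a) = 0$: for any finite Galois extension $L/\bb{Q}$ the normal basis theorem exhibits $L$ as a free $\bb{Q}[\rm{Gal}(L/\bb{Q})]$-module, whose group cohomology vanishes in positive degrees, and the result follows by passing to the colimit over $L$.

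The only step requiring real care is producing the $\bb{Q}$-rational composition series with $\bb{G}_a$-quotients; this is where the characteristic zero hypothesis is essential, and without it the conclusion of the lemma genuinely fails. Everything else is formal manipulation of the long exact sequence of Galois cohomology, exactly in the style of the proof of \cite[Theorem 9.5]{Wilkerson} cited in the statement.
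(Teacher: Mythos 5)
Your argument is correct, and it is essentially the approach the paper relies on: the paper gives no independent proof but defers to the proof of Wilkerson's Theorem 9.5, which establishes surjectivity on $\bb{Q}$-points in exactly this way, by identifying the obstruction with a class in the Galois cohomology set $H^1(\bb{Q},G)$ and showing this vanishes for unipotent $G$ in characteristic zero via a filtration with vector-group quotients and additive Hilbert 90. So your write-up matches the intended proof, with the only (harmless) cosmetic difference that one need not refine all the way to $\bb{G}_a$-quotients, since $H^1(\bb{Q},\bb{G}_a^m)=0$ already suffices.
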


By work of Malcev, if $U$ is a unipotent linear algebraic group then the group $U(\bb{Q})$ is essentially determined by its nilpotent Lie algebra $\mathfrak{u}$ over $\bb{Q}$ \cite[Chapter 6]{Segal}, through mutually inverse functions we will describe shortly:
\begin{equation}\label{eqn:log-exp} \begin{tikzcd} \mathfrak{u} \rar[swap]{\exp} & U(\bb{Q}) \arrow[shift left=-1ex,swap]{l}{\log}.\end{tikzcd}\end{equation}
Choosing an inclusion $U \hookrightarrow \rm{U}_n$ we see that $U(\bb{Q})$ lies in the subgroup $\rm{U}_n(\bb{Q}) \subset \rm{GL}_n(\bb{Q})$, though in this section we follow Segal and write $\rm{Tr}_n(1,\bb{Q})$ for $\rm{U}_n(\bb{Q})$ to make it easier for the reader to consult this reference. As $\rm{Tr}_n(1,\bb{Q})$ is the group of upper-triangular matrices with entries in $\bb{Q}$ and $1$'s on the diagonal, its Lie algebra is given by the Lie algebra $\rm{Tr}_n(0,\bb{Q})$ of upper-triangular matrices with entries in $\bb{Q}$ and $0$'s on the diagonal, with Lie bracket the commutator. Then the usual power series provide mutually inverse functions
\[\begin{tikzcd} \rm{Tr}_n(0,\bb{Q}) \rar[swap]{\exp} & \rm{Tr}_n(1,\bb{Q}) \arrow[shift left=-1ex,swap]{l}{\log},\end{tikzcd}\]
which by nilpotency resp.~unipotency are given by polynomials with rational coefficients in the matrices. Given $U(\bb{Q}) \subseteq \rm{Tr}_n(1,\bb{Q})$ we have $\mathfrak{u} \subseteq \rm{Tr}_n(0,\bb{Q})$, and the mutually inverse functions \eqref{eqn:log-exp} are obtained by restriction. The following notion shall be useful for our arguments; it uses the notation $\log(N) \coloneqq \{\log(n) \mid n \in N\}$.

\begin{definition}
	A subgroup $N \subset U(\bb{Q})$ is a \emph{lattice subgroup} if $\log(N) \subset \mathfrak{u}$ is an additive subgroup. 
\end{definition}

\begin{lemma}\label{lem:everything-lattice-subgroups}Every finitely generated subgroup $N \subset U(\bb{Q})$ has finite index in a lattice subgroup $N' \subset U(\bb{Q})$ satisfying $\rm{span}_{\bb{Q}} \log(N) = \rm{span}_{\bb{Q}} \log(N')$.\end{lemma}

\begin{proof}Firstly, by \cite[Lemma 6.B.2]{Segal} we may assume that $N \subseteq \rm{Tr}_n(1,\bb{Z})$, the subgroup of upper-triangular matrices with entries in $\bb{Z}$ and $1$'s on the diagonal. The construction preceding \cite[Proposition 6.B.1]{Segal} provides for each $d \in \bb{N}_{>0}$ a lattice subgroup 
\[\Gamma_n(d) \subset \rm{Tr}_n(1,\bb{Q}),\] 
given by those upper-triangular matrices $A$ with $1$'s on the diagonal and $(i,j)$th entry $A_{i,j}$ satisfying $c_{j-i} A_{i,j} \in \bb{Z}$ where $c_1 = d$ and $c_i$ for $i \geq 2$ is determined by $c_i = (i!)^{i+1} c_{i-1}^{i^2}$; this is denoted $\Gamma_n(c_1,\ldots,c_{n-1})$ in \cite[Section 6.B]{Segal}. Then \cite[Theorem 6.B.3]{Segal} says that $N'=\Gamma_n(1) \cap \exp(\rm{span}_{\bb{Q}} \log(N))$ is a lattice subgroup containing $N$ as a finite index subgroup. The final statement follows from the inclusions 
\[\log(N) \subset \log(N') = \log(\Gamma_n(1) \cap \exp(\rm{span}_{\bb{Q}} \log(N))) \subset \rm{span}_{\bb{Q}} \log(N).\qedhere\]\end{proof}

We first give a criterion for a subgroup $N \subset U(\bb{Q})$ to be an arithmetic subgroup \cite[Exercise 6.C.13]{Segal}.

\begin{corollary}\label{cor:condition-arithmetic-in-unipotent} A subgroup $N \subset U(\bb{Q})$ is an arithmetic subgroup of $U(\bb{Q})$ if and only if $N$ is finitely generated and $\rm{span}_\bb{Q} \log(N) = \mathfrak{u}$.\end{corollary}

\begin{proof}Arithmetic groups are finitely generated \cite[1.3(1)]{Serre}, and by the previous lemma we may assume that $N = \Gamma_n(1) \cap \exp(\rm{span}_\bb{Q} \log(N))$. 
	
For $\Rightarrow$ we argue by contradiction. If $\rm{span}_\bb{Q} \log(N) \subsetneq \mathfrak{u}$ then there exists an element $u \in \mathfrak{u} \backslash (\rm{span}_\bb{Q} \log(N))$ and replacing it with some multiple if necessary we can assume that $\exp(u) \in U_\bb{Z}$. As $\log(\exp(u)^r) = ru$, we see that all positive powers of $\exp(u)$ are distinct but none lie in $N$, so that $N$ can not have finite index in $U_\bb{Z}$. For $\Leftarrow$ we observe that if $\rm{span}_\bb{Q} \log(N) = \mathfrak{u}$ then $N = \Gamma_n(1) \cap U(\bb{Q})$ and this contains $U_\bb{Z} = \rm{Tr}_n(1,\bb{Z}) \cap U(\bb{Q})$ as a finite index subgroup because $\rm{Tr}_n(1,\bb{Z})$ has finite index in $\Gamma_n(1)$.
\end{proof}

It is a consequence that any finitely generated subgroup $N$ of $U(\bb{Q})$ is arithmetic; more precisely, it is an arithmetic subgroup of $\exp(\rm{span}_\bb{Q} \log(N))$. We next give a technical result which allows us add elements to lattice subgroups with group actions.

\begin{lemma}\label{lem:g-fixed-lattice} Suppose a group $G$ acts on $U(\bb{Q})$. If $N$ is a finitely generated lattice subgroup of $U(\bb{Q})$ preserved setwise by $G$ and satisfying $\rm{span}_\bb{Q}\,\log(N) = \mathfrak{u}$, and $n_1,\ldots,n_r$ are finitely many elements of $U(\bb{Q})$, then there is another lattice subgroup $N'$ of $U(\bb{Q})$ preserved setwise by $G$ and containing both $N$ and $n_1,\ldots,n_r$.\end{lemma}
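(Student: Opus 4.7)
The plan is to transfer the problem to the Lie algebra $\mathfrak{u}$ via the Malcev correspondence \eqref{eqn:log-exp}, perform a closure construction there using linear algebra, and transport back via $\exp$.

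First I would observe that since the group operation on $U(\bb{Q})$ corresponds under $\log$ to the BCH product on $\mathfrak{u}$, and the Lie bracket on $\mathfrak{u}$ is recoverable from BCH by inversion, any group automorphism of $U(\bb{Q})$ corresponds to a Lie algebra automorphism of $\mathfrak{u}$. Thus the action of $G$ on $U(\bb{Q})$---implicitly by group automorphisms---descends under $\log$ to an action on $\mathfrak{u}$ that is $\bb{Q}$-linear and bracket-preserving. In particular a subset $M \subseteq U(\bb{Q})$ is setwise $G$-invariant if and only if $\log(M) \subseteq \mathfrak{u}$ is.

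Next I would let $V \subseteq \mathfrak{u}$ be the smallest $G$-invariant $\bb{Q}$-Lie subalgebra containing $\log(N) \cup \{\log(n_1), \ldots, \log(n_r)\}$. Such a $V$ is well-defined as the intersection of all subalgebras with these properties---this collection is nonempty since $\mathfrak{u}$ itself qualifies---and is finite-dimensional because $\mathfrak{u}$ is. Setting $N' \coloneqq \exp(V)$, I would verify the required properties: since $V$ is closed under brackets and $\mathfrak{u}$ is nilpotent, BCH restricted to $V \times V$ is a finite polynomial in iterated brackets and hence takes values in $V$, so $N'$ is a subgroup of $U(\bb{Q})$ with $\log(N') = V$. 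Because $V$ is a $\bb{Q}$-subspace it is \emph{a fortiori} an additive subgroup, so $N'$ is a lattice subgroup. The containments $N \subseteq N'$ and $n_i \in N'$ are built into the definition of $V$, and the setwise $G$-invariance of $N'$ follows from that of $V$.

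The only step requiring more than bookkeeping is the descent of the $G$-action to a $\bb{Q}$-linear action on $\mathfrak{u}$, which is the automorphism part of the Malcev equivalence already implicit in the log/exp framework used in the paper. I do not anticipate this being a serious obstacle.
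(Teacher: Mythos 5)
Your reduction of the $G$-action to a linear, bracket-preserving action on $\mathfrak{u}$ is fine, and it is also the (implicit) first step of the paper's argument. The gap is in the construction $N'=\exp(V)$ with $V$ the smallest $G$-invariant $\bb{Q}$-Lie subalgebra containing $\log(N)$ and the $\log(n_i)$: this only meets the letter of the displayed definition of lattice subgroup (``$\log$ is an additive subgroup''), and under that literal reading the lemma is vacuous --- $N'=U(\bb{Q})$ itself qualifies, so no construction is needed at all. That triviality is the warning sign that the intended content is stronger. The lemma is used exactly once, in the proof of \cref{lem:arith-levi-char}, where one needs $N$ to have \emph{finite index} in $N'$ (so that $A$ has finite index in $\Gamma\ltimes N'$) and $\Gamma\ltimes N'$ to be an arithmetic, hence finitely generated, subgroup. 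Your $N'=\exp(V)$ is uniquely divisible and never finitely generated (for $V\neq 0$), so both properties fail; worse, in that application $N$ is an arithmetic subgroup of $U(\bb{Q})$, so $\rm{span}_\bb{Q}\log(N)=\mathfrak{u}$ and your recipe returns exactly $N'=U(\bb{Q})$, which breaks the subsequent finite-index argument.

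The missing idea is denominator control: the enlargement must stay commensurable with $N$, i.e.\ inside a discrete lattice. The paper arranges $N\subseteq \rm{Tr}_n(1,\bb{Z})$, then uses that $G$ acts linearly and preserves the additive group $L=\log(N)$, so the rescaled group $\tfrac{1}{d}L$ is still $G$-invariant and contains the finitely many elements $\log(n_i)$ for a \emph{single} integer $d$ (the lcm of denominators); it then takes $N'$ of the form $\Gamma_n(d)\cap\exp(\mathfrak{u})$, a genuine lattice subgroup in Segal's sense whose logarithm lies in $\tfrac{1}{d}\log\Gamma_n(1)$ and is therefore a finitely generated discrete additive group, and checks it is preserved by $G$ and contains $N$ and the $n_i$. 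Passing to the $\bb{Q}$-span (or Lie subalgebra generated), as you do, discards precisely this integrality, and the defect cannot be fixed by taking $V$ smaller; you need a bounded-denominator construction along these lines, not a Malcev-closure construction.
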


\begin{proof}We start with an observation. If $L \subset V$ is a lattice in a finite dimensional $\bb{Q}$-vector space $V$, and $v_1,\ldots,v_r$ are finitely many elements of $V$, then there exists an integer $d \in \bb{N}_{>0}$ such that $\frac{1}{d}L$ contains $v_1,\ldots,v_r$. Indeed, with loss of generality $V = \bb{Q}^n$ and $L = \bb{Z}^n$, and we can take $d$ to be the least common multiple of the denominators of entries of $v_1,\ldots,v_r$. If a group $G$ acting on $V$ preserves $L$ setwise, then it also preserves $L'$ setwise. To prove the lemma, we apply this observation to $L=\log(N)$ and $v_i = \log(n_i)$ and take $N'$ to be the smallest lattice subgroup containing $\exp(\frac{1}{d}L)$. This is given by the intersection of all lattice subgroups containing $\exp(\frac{1}{d}L)$, which exists because we may assume without loss of generality that $N = \exp(L) \subseteq \Gamma_n(1) \cap U(\bb{Q})$ and then $\exp(\frac{1}{d}L) \subset \Gamma_n(d) \cap U(\bb{Q})$. Moreover, $N'$ is preserved setwise by $G$ because otherwise $\bigcap_{g \in G} g(N')$ would be a smaller lattice subgroup containing $\exp(\frac{1}{d}L)$.\end{proof}

\subsection{Extensions of arithmetic groups} For a linear algebraic group $G$, the \emph{unipotent radical} $U \subseteq G$ is the largest unipotent subgroup of $G$. Its quotient $R = G/U$ is \emph{reductive}, i.e.~has trivial unipotent radical, and the existence of Levi decompositions \cite[p.\,158]{Borel} gives a splitting $s \colon R \to G$ exhibiting $G$ as a semi-direct product
\[G \cong R \ltimes U\]
as in \cref{lem:semi-direct}. Taking $\bb{Q}$-points, we obtain a split exact sequence
\[\begin{tikzcd} 1 \rar & U(\bb{Q}) \rar & G(\bb{Q}) \rar{\pi} & R(\bb{Q}) \arrow[bend left=15]{l}{s} \rar & 1.\end{tikzcd}\]
We use this to characterise arithmetic subgroups of $G(\bb{Q})$.

\begin{lemma}\label{lem:arith-levi-char} Let $G$ be a linear algebraic group over $\bb{Q}$ with Levi decomposition $G \cong R \ltimes U$ and let $A \subset G(\bb{Q})$ be a subgroup. Then the following are equivalent:
	\begin{enumerate}
		\item $A$ is an arithmetic subgroup.
		\item When we form
		\[\begin{tikzcd} 1 \rar & N \coloneqq A \cap U(\bb{Q}) \rar \dar[hook] & A \rar \dar[hook] & \Gamma \coloneqq \pi(A) \dar[hook] \rar & 1 \\
			1 \rar & U(\bb{Q}) \rar & G(\bb{Q}) \rar{\pi} & R(\bb{Q}) \rar \arrow[bend left=15]{l}{s} & 1  \end{tikzcd}\]
		we have that $N \subset U(\bb{Q})$ and $\Gamma \subset R(\bb{Q})$ are arithmetic subgroups.
	\end{enumerate}
\end{lemma}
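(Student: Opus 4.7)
The forward direction follows immediately from \cref{lem:arithm-constructions}: applying part~(1) to the surjective morphism $\pi \colon G \to R$ shows that $\Gamma = \pi(A)$ is arithmetic, and part~(2) applied to the closed subgroup $U \subseteq G$ shows that $N = A \cap U(\bb{Q})$ is arithmetic.

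For the backward direction, my plan is to construct an arithmetic subgroup $H \subseteq G(\bb{Q})$ of the split form $H = s(\Gamma) \ltimes N^\ast$ inside $s(R(\bb{Q})) \ltimes U(\bb{Q}) = G(\bb{Q})$, with $A$ of finite index in $H$; arithmeticity of $A$ will then follow. The heart of the argument is constructing $N^\ast$. The conjugation action of $s(R(\bb{Q}))$ on $U(\bb{Q})$ comes from an algebraic action of $R$ on $U$, which induces an algebraic representation of $R$ on the nilpotent Lie algebra $\mathfrak{u}$. The image of $\Gamma$ under this composition is arithmetic in $\rm{GL}(\mathfrak{u})(\bb{Q})$ by \cref{lem:arithm-constructions}(1), and hence preserves some lattice $L \subseteq \mathfrak{u}$; via $\exp$ this produces a $\Gamma$-invariant lattice subgroup of $U(\bb{Q})$. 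Fixing a finite generating set $r_1,\ldots,r_k$ of $\Gamma$ and lifts $a_i \in A$, I write $a_i = u_i \cdot s(r_i)$ with $u_i \in U(\bb{Q})$, and then apply \cref{lem:g-fixed-lattice} to enlarge the lattice to a $\Gamma$-invariant lattice subgroup $N^\ast$ containing both $N$ and the elements $u_1,\ldots,u_k$.

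With $N^\ast$ chosen this way, each generator of $A$ lies in $H$---since $N \subseteq N^\ast$ and $a_i = u_i \cdot s(r_i) \in N^\ast \cdot s(\Gamma)$---so $A \subseteq H$. The identities $A \cap U(\bb{Q}) = N$ and $\pi(A) = \Gamma$ combined with the splitting of $H$ yield $H = N^\ast \cdot A$, so $[H:A] = [N^\ast : N]$, which is finite because both $N$ and $N^\ast$ are arithmetic in $U(\bb{Q})$.

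It remains to verify that $H$ is itself arithmetic in $G(\bb{Q})$, which I would establish by comparison with a reference arithmetic subgroup $G_\bb{Z} \subseteq G(\bb{Q})$: after replacing $\Gamma$ and $N^\ast$ by commensurable finite-index subgroups chosen inside $\pi(G_\bb{Z})$ and $G_\bb{Z} \cap U(\bb{Q})$ respectively, one matches $H$ up to finite index with the ``product-type'' arithmetic subgroup dictated by the Levi decomposition. The hardest step will be this final comparison, since $G_\bb{Z}$ itself need not split along the Levi decomposition; bridging the gap requires a snake-lemma-style argument using the exactness of $\bb{Q}$-points for unipotent kernels (\cref{lem:unipotent-q-points}) to control the discrepancy between the split model $H$ and the given integral structure on $G$.
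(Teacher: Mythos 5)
Your proof is essentially the paper's own argument: the forward direction is the same appeal to \cref{lem:arithm-constructions}, and for the converse the paper likewise takes lifts of generators, forms their unipotent discrepancies $u_i = a_i s(\pi(a_i))^{-1}$, applies \cref{lem:g-fixed-lattice} to obtain a $\Gamma$-invariant lattice subgroup $N' \supseteq N$ containing the $u_i$, and shows $A$ has finite index in the split subgroup $\Gamma \ltimes N'$, which is exactly your $H = s(\Gamma)\ltimes N^{\ast}$. The step you defer to the end --- verifying that $H$ itself is arithmetic in $G(\bb{Q})$ --- is precisely where the paper also stops: it declares that exhibiting an arithmetic subgroup containing $A$ with finite index suffices and offers $\Gamma \ltimes N'$ as that subgroup without further justification, so no comparison with an integral model $G_{\bb{Z}}$ of the kind you sketch is carried out there either.
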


\begin{proof}For (1) $\Rightarrow$ (2), the groups $\Gamma = \pi(A)$ and $N = A \cap U(\bb{Q})$ are arithmetic by \cref{lem:arithm-constructions} (i) and (ii) respectively.
	
\smallskip
	
	
For (2) $\Rightarrow$ (1), it suffices to exhibit an arithmetic subgroup of $G(\bb{Q})$ which contains $A$ as a finite index subgroup. Since $\Gamma$ and $N$ are arithmetic they are finitely generated, so $A$ is also finitely generated and we may pick a finite generating set $a_1,\ldots,a_p$ of $A$ and set $u_i = a_is(\pi(a_i))^{-1} \in U(\bb{Q})$. Using \cref{lem:g-fixed-lattice}, let $N'$ be a lattice subgroup of $U(\bb{Q})$ preserved by $\Gamma$ and containing $N$ as well as $u_1,\ldots,u_p$. As $\Gamma$ leaves $N'$ invariant, we can consider the semi-direct product $\Gamma \ltimes N'$. We have arranged that $\Gamma \ltimes N'$ contains $A$: the crucial observation is that if $a = (r,u) \in A \subset R(\bb{Q}) \ltimes U(\bb{Q})$ then $u \in N'$. This is true because $a = (r,u)$ is a product of powers of $a_i = (r_i,u_i)$ and hence $u$ is a product of powers of $u_i$ acted upon by elements of $\Gamma$, which lies $N'$ because it contains the $u_i$'s and is preserved by $\Gamma$. Since there is a map of extensions
\[\begin{tikzcd} 1 \rar & N \rar \dar[hook] & A \rar \dar[hook] & \Gamma \rar \dar[equal] & 1 \\
	1 \rar & N' \rar & \Gamma \ltimes N' \rar & \Gamma \rar & 1 \end{tikzcd}\]
with the images of the outer vertical homomorphisms having finite index, the image of the middle vertical homomorphism also has finite index.
\end{proof}

This has the following consequence, attributed without proof to Borel and Gr\"unewald in \cite{TriantafillouTelAviv}:

\begin{corollary}\label{cor:borel-lemma} Suppose we have a map of short exact sequences of groups
		\[\begin{tikzcd} 1 \rar & A \rar \dar{r} & A' \rar \dar{r'} & A'' \rar \dar{r''} & 1 \\
			1 \rar & G(\bb{Q}) \rar & G'(\bb{Q}) \rar & G''(\bb{Q}) \rar & 1 \end{tikzcd} \]
		with the following properties:
		\begin{enumerate}
			\item The maps $r$ and $r''$ have finite kernel and image an arithmetic subgroup.
			\item The bottom row arises from a short exact sequence of linear algebraic groups with $G$ unipotent.
		\end{enumerate}
		Then $r'$ has finite kernel and image an arithmetic subgroup.
\end{corollary}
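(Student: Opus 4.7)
The plan is to prove $\ker r'$ is finite by a direct diagram chase, and then show $r'(A')$ is an arithmetic subgroup of $G'(\bb{Q})$ using the Levi decomposition characterization \cref{lem:arith-levi-char}. For the finite kernel, any element of $\ker r'$ projects to $\ker r''$ (a finite group) in $A''$, and $\ker r' \cap A = \ker r$ is also finite, so $\ker r'$ is finite-by-finite.

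Next I would analyze the linear algebraic structure. Since $G$ is unipotent over $\bb{Q}$ (hence connected in characteristic zero) and normal in $G'$, it is contained in the unipotent radical $U' \coloneqq \mathcal{R}_u(G')$. I claim the preimage $V \subseteq G'$ of $U'' \coloneqq \mathcal{R}_u(G'')$ equals $U'$: the preimage sits in an extension $1 \to G \to V \to U'' \to 1$ of connected unipotent groups, so $V$ is connected unipotent and normal in $G'$, hence contained in $U'$; conversely the image $U'/G$ of $U'$ in $G''$ is connected unipotent and normal, hence contained in $U''$, so $U' \subseteq V$. It follows that the induced map $R' \coloneqq G'/U' \to G''/U'' \eqqcolon R''$ is an isomorphism of reductive quotients, and $1 \to G \to U' \to U'' \to 1$ is a short exact sequence of unipotent linear algebraic groups.

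By \cref{lem:arith-levi-char} applied to a Levi decomposition $G' \cong R' \ltimes U'$, showing $r'(A')$ is arithmetic reduces to showing (a) the projection of $r'(A')$ to $R'(\bb{Q})$ is arithmetic and (b) the intersection $M \coloneqq r'(A') \cap U'(\bb{Q})$ is arithmetic in $U'(\bb{Q})$. For (a), since $r(A) \subseteq G(\bb{Q}) \subseteq U'(\bb{Q})$ projects trivially to $R'(\bb{Q})$, the projection of $r'(A')$ factors through $A'/A \cong A''$ and, under the isomorphism $R' \cong R''$, coincides with the projection of $r''(A'')$ to $R''(\bb{Q})$; this is arithmetic by a second application of \cref{lem:arith-levi-char}, now to $r''(A'')$. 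For (b), $M$ sits in a short exact sequence
\[1 \lra K \lra M \lra r''(A'') \cap U''(\bb{Q}) \lra 1,\]
where $K = M \cap G(\bb{Q})$ contains $r(A)$ with finite index (since $\ker r'$ and $\ker r''$ are finite), so both outer terms are arithmetic in $G(\bb{Q})$ and $U''(\bb{Q})$ respectively.

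This reduces the claim to the purely unipotent case: given a short exact sequence $1 \to G \to U' \to U'' \to 1$ of unipotent linear algebraic groups and a subgroup $M \subseteq U'(\bb{Q})$ whose trace on $G(\bb{Q})$ and image in $U''(\bb{Q})$ are arithmetic, $M$ itself is arithmetic in $U'(\bb{Q})$. I would prove this via the Malcev criterion stated before \cref{lem:g-fixed-lattice}: $M$ is finitely generated as an extension of two finitely generated groups, and $\log(M) \subseteq \mathfrak{u}'$ spans $\mathfrak{u}'$ over $\bb{Q}$ because it contains $\log(K)$ spanning $\mathfrak{g}$ and, by functoriality of $\log$, projects onto $\log(r''(A'') \cap U''(\bb{Q}))$ spanning $\mathfrak{u}''$; the short exact sequence $0 \to \mathfrak{g} \to \mathfrak{u}' \to \mathfrak{u}'' \to 0$ of Lie algebras then forces $\log(M)$ to span all of $\mathfrak{u}'$. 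The main obstacle is the linear algebraic analysis establishing $R' \cong R''$ and the unipotent short exact sequence; the remaining steps are routine applications of the Malcev criterion and \cref{lem:arith-levi-char}.
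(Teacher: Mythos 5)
Your proposal is correct and follows essentially the same route as the paper: split $G'$ via a Levi decomposition whose reductive part is identified with that of $G''$, get the arithmeticity of the reductive projection from $r''(A'')$ via \cref{lem:arith-levi-char} (1)$\Rightarrow$(2), verify the Malcev criterion (finite generation plus logarithms spanning $\mathfrak{u}'$, using the exact sequence $0 \to \mathfrak{g} \to \mathfrak{u}' \to \mathfrak{u}'' \to 0$) for the intersection with the unipotent radical, and conclude with (2)$\Rightarrow$(1). If anything you are slightly more careful than the paper's write-up, both in justifying that the preimage of $U''$ in $G'$ is exactly $U'$ and in only claiming that $r'(A') \cap G(\bb{Q})$ contains $r(A)$ with finite index rather than equality.
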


\begin{proof}That $r'$ has a finite kernel follows from the exact sequence
	\[1 \lra \ker(r) \lra \ker(r') \lra \ker(r'').\]
	
\smallskip
	
To prove that the image of $r'$ is an arithmetic subgroup, we write $G = U$ to indicate it is unipotent. Picking a Levi decomposition $G' \cong R \ltimes U'$ we get an induced Levi decomposition $G'' \cong R \ltimes U''$ with $U'' = U'/U$. Thus we may replace the bottom row with
	\[1 \lra U(\bb{Q}) \lra R(\bb{Q}) \ltimes U'(\bb{Q}) \lra R(\bb{Q}) \ltimes U''(\bb{Q}) \lra 1\]
	where $U''(\bb{Q}) = U'(\bb{Q})/U(\bb{Q})$. Using the condition that $r''(A'')$ is arithmetic and (1) $\Rightarrow$ (2) of \cref{lem:arith-levi-char}, we see that $\pi_R(r'(A')) = \pi_R(r''(A'')) \subset R(\bb{Q})$ is arithmetic. It remains to prove that $r'(A') \cap U'(\bb{Q})$ is arithmetic, which by \cref{cor:condition-arithmetic-in-unipotent} follows if it is finitely generated and its logarithms span $\mathfrak{u}'$. The first is a consequence of the short exact sequence of groups
	\[1 \lra r(A) \cap U(\bb{Q}) \lra r'(A') \cap U'(\bb{Q}) \lra r''(A'') \cap U''(\bb{Q}) \lra 1\]
	where the outer terms are finitely generated because they are arithmetic subroups as a consequence of \cref{lem:arithm-constructions} \eqref{enum:intersection-arithm}. The second follows from the short exact sequence of $\bb{Q}$-vector spaces
	\[0 \lra \mathfrak{u} \lra \mathfrak{u'} \lra \mathfrak{u}'' \lra 0\]
	where the outer terms are spanned by logarithms of elements of $r(A) \cap U(\bb{Q})$ and $r''(A'') \cap U''(\bb{Q})$ respectively.
\end{proof}

\section{Homotopy automorphisms}

\subsection{Rationalisation of spaces} Rationalisation of spaces is the left adjoint 
\[(-)_\bb{Q} \colon \icat{S} \lra \icat{S}_\bb{Q}\]
in a reflective localisation of $\infty$-categories, where the left side is the $\infty$-category of spaces and the right side is the full sub-category of $\bb{Q}$-local spaces. In particular, the unit of the adjunction gives a natural transformation $r \colon \rm{id}_\icat{S} \to (-)_\bb{Q}$. It may be described in terms of simplicial model categories by a continuous functor \cite[Theorem I.C.13]{Farjoun} which preserves cofibrations by the construction in \cite[Section I.B]{Farjoun}; these results apply to Bousfield homological rationalisation \cite[Section I.E.4]{Farjoun} and agrees on nilpotent spaces of finite type \cite[p.~133]{Bousfield} with the more familiar Bousfield--Kan rationalisation \cite[I.E.3]{Farjoun}.

When applied to a nilpotent pointed space of finite type, it yields once more a nilpotent pointed space of finite type. In this case the effect on homotopy groups well-understood; it is a rationalisation in the sense of \cite{HiltonMilsinRoitberg}. On fundamental groups the homomorphism
\[\pi_1(X) \lra \pi_1(X_\bb{Q})\]
is naturally isomorphic to the Malcev completion; this is implicit in loc.cit.~but spelled out in \cite{CenklPorter}. Thus, by the description of Malcev completion on \cite[p.\,107]{Segal}, up to isomorphism, this homomorphism is equal to the following composition : first we take the quotient $N$ of $\pi_1(X)$ by its torsion subgroup (well-defined by \cite[Corollary I.B.10]{Segal}), then we pick an inclusion $N \subseteq \rm{Tr}_n(1,\bb{Z})$ \cite[Theorem 5.B.2]{Segal}, and finally include $N$ into $\exp(\rm{span}_\bb{Q} \log(N))$ (with notation as in \cref{sec:unipotent}). This yields the following result:

\begin{proposition}\label{prop:rationalising-pi-1} If $X$ is a nilpotent pointed space of finite type, then $\pi_1(X_\bb{Q})$ is isomorphic to the $\bb{Q}$-points of a unipotent linear algebraic group, unique up to isomorphism. Moreover, the homomorphism
\[\pi_1(X) \lra \pi_1(X_\bb{Q})\]
has finite kernel and image given by an arithmetic subgroup.\end{proposition}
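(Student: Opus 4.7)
The plan is to translate the explicit description of $\pi_1(X) \to \pi_1(X_\bb{Q})$ via Malcev completion given in the paragraph immediately preceding the statement into the algebraic-group framework of Section~\ref{sec:unipotent}. Since $X$ is nilpotent of finite type, $\pi_1(X)$ is a finitely generated nilpotent group; its torsion subgroup $T$ is therefore finite, and the quotient $N = \pi_1(X)/T$ is a finitely generated torsion-free nilpotent group. By the recalled description, the rationalisation map on $\pi_1$ factors as
\[ \pi_1(X) \twoheadrightarrow N \hookrightarrow \rm{Tr}_n(1,\bb{Z}) \hookrightarrow \exp\bigl(\rm{span}_\bb{Q}\log(N)\bigr) \subset \rm{Tr}_n(1,\bb{Q}) \]
for some $n$, and the last term is identified with $\pi_1(X_\bb{Q})$.

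Next I would set $\mathfrak{u} \coloneqq \rm{span}_\bb{Q}\log(N) \subset \rm{Tr}_n(0,\bb{Q})$ and verify that $\mathfrak{u}$ is a Lie subalgebra. Using the Baker--Campbell--Hausdorff formula, $\log$ applied to the group commutator $xyx^{-1}y^{-1}$ of elements $x,y \in N$ equals $[\log x,\log y]$ plus a sum of strictly longer brackets; a descending induction on nilpotency class (lawful since $N$ is nilpotent as a quotient of $\pi_1(X)$) then shows $[\log x,\log y] \in \mathfrak{u}$. So $\mathfrak{u}$ is a finite-dimensional nilpotent Lie $\bb{Q}$-algebra, and by Malcev's correspondence the subset $\exp(\mathfrak{u}) \subset \rm{U}_n(\bb{Q})$ coincides with the $\bb{Q}$-points of a closed subgroup $U \subseteq \rm{U}_n$, cut out inside $\rm{U}_n$ by the polynomial defining equations of $\mathfrak{u}$ transported through the polynomial bijection $\exp \colon \rm{Tr}_n(0,\bb{Q}) \to \rm{Tr}_n(1,\bb{Q})$. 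This yields $\pi_1(X_\bb{Q}) \cong U(\bb{Q})$, and uniqueness of $U$ up to isomorphism of linear algebraic groups is exactly the result cited at the start of Section~\ref{sec:unipotent} (Chatterjee's \cite[Corollary~3.8~(1)]{Chatterjee}).

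Finally, for the kernel and image of $\pi_1(X) \to U(\bb{Q})$: the kernel is the kernel of the first step $\pi_1(X) \twoheadrightarrow N$, namely $T$, which is finite since the torsion subgroup of a finitely generated nilpotent group is finite. The image is $N \subseteq U(\bb{Q})$, which is an arithmetic subgroup by the criterion stated just before \cref{lem:g-fixed-lattice}: $N$ is finitely generated, and $\rm{span}_\bb{Q}\log(N) = \mathfrak{u}$ by the very definition of $\mathfrak{u}$.

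The only subtle point is the Lie-to-group passage: identifying $\exp(\mathfrak{u})$ as the $\bb{Q}$-points of a unipotent linear algebraic group $U$, rather than merely as an abstract subgroup of $\rm{Tr}_n(1,\bb{Q})$. This is the classical Malcev correspondence and is the one step where the argument leaves the world of discrete groups; everything else is an unpacking of already-stated results.
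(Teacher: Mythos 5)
Your proposal is correct and follows essentially the same route as the paper: the paper's proof is precisely the unpacking of the factorisation of the Malcev completion through $N = \pi_1(X)/\text{torsion} \hookrightarrow \rm{Tr}_n(1,\bb{Z}) \hookrightarrow \exp(\rm{span}_\bb{Q}\log(N))$ recalled just before the statement, combined with the arithmeticity criterion ($N$ finitely generated and $\rm{span}_\bb{Q}\log(N)=\mathfrak{u}$) and the uniqueness of the unipotent algebraic-group structure from Chatterjee. Your additional verification that $\rm{span}_\bb{Q}\log(N)$ is a Lie subalgebra and that $\exp(\mathfrak{u})$ is the set of $\bb{Q}$-points of a closed subgroup of $\rm{U}_n$ is exactly the classical Malcev correspondence the paper leaves implicit, so no genuinely different ideas are involved.
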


One further result about rationalisation that we need, is the following result concerning mapping spaces \cite[Theorem II.2.6, Theorem II.3.11, Corollary II.5.4 (a)]{HiltonMilsinRoitberg}:

\begin{proposition}\label{prop:rationalising-mapping-spaces} If $X$ is a connected nilpotent space of finite type and $W$ is a connected finite CW-complex, the map on sets of path components
	\[\pi_0\, \Map(W,X) \lra \pi_0 \,\Map(W,X_\bb{Q})\]
induced by the rationalisation map $r \colon X \to X_\bb{Q}$ is finite-to-one. Moreover, the path component $\Map(W,X)_f$ of a map $f \colon W \to X$ is nilpotent and the map between path components
	\[\Map(W,X)_f \lra \Map(W,X_\bb{Q})_{rf}\]
is a rationalisation.
\end{proposition}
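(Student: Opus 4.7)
The plan is to induct up a principal Postnikov tower of $X$. Since $X$ is nilpotent of finite type, we can write $X \simeq \lim_n X_n$ with $X_0 = *$ and each $X_n \to X_{n-1}$ a principal fibration with fibre $K(A_n, k_n)$ for some finitely generated abelian group $A_n$, and connectivities $k_n \to \infty$. Rationalisation is compatible with such a tower, giving $X_\bb{Q} \simeq \lim_n (X_n)_\bb{Q}$ with principal fibrations $(X_n)_\bb{Q} \to (X_{n-1})_\bb{Q}$ of fibre $K(A_n \otimes \bb{Q}, k_n)$. Since $\Map(W, -)$ sends fibrations to fibrations and preserves limits, applying it produces a tower of fibrations of mapping spaces, again compatible with rationalisation. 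Because $W$ is a finite CW complex, on each homotopy group the tower stabilises at a finite stage, so it suffices to prove everything stagewise and then pass to the limit.

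The base case is $X = K(A, n)$ with $A$ a finitely generated abelian group. Here $\pi_0\,\Map(W, K(A, n)) \cong H^n(W; A)$ (well defined since $K(A,n)$ is simple), which is a finitely generated abelian group, and similarly $\pi_k(\Map(W, K(A, n)), f) \cong H^{n-k}(W; A)$ for $k \geq 1$. In particular each path component is a generalised Eilenberg--MacLane space, hence nilpotent. The rationalisation $K(A, n) \to K(A \otimes \bb{Q}, n)$ induces on each of these groups the canonical map to the tensor product with $\bb{Q}$; this has kernel the torsion subgroup, which is finite (so the map on $\pi_0$ is finite-to-one) and cokernel a divisible group, so each map $\Map(W, K(A,n))_f \to \Map(W, K(A \otimes \bb{Q}, n))_{rf}$ is a rationalisation of nilpotent spaces of finite type.

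For the inductive step, applying $\Map(W, -)$ to the principal fibration $X_n \to X_{n-1}$ yields a fibration
\[F_n \lra \Map(W, X_n) \lra \Map(W, X_{n-1})\]
with $F_n$ a union of components of $\Map(W, K(A_n, k_n))$, and the analogous fibration after rationalising. Fixing a map $f \colon W \to X$ with images $f_n$ in each $X_n$, the long exact sequence of homotopy groups based at $f_n$, compared termwise with its rationalised counterpart, lets us inductively transfer all three conclusions: finite-to-oneness on $\pi_0$ (via exactness of pointed sets together with the inductive hypothesis and the base case for both the fibre and the base), nilpotency of the path components (since an extension of nilpotent groups by nilpotent groups with nilpotent action is nilpotent), and the rationalisation statement (by the five-lemma applied termwise on homotopy groups, using that rationalisation of finitely generated nilpotent groups is exact). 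Taking the inverse limit finishes the proof.

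The main obstacle is the $\pi_0$ bookkeeping: the exact sequence at the $\pi_1/\pi_0$ boundary is only an exact sequence of pointed sets, and one must control the action of $\pi_1\,\Map(W, X_{n-1})_{f_{n-1}}$ on $\pi_0 F_n$. By the inductive hypothesis the source is a finite-type nilpotent group and by the base case the target is a finitely generated abelian group, so finite-to-oneness passes along the sequence. A secondary technical point is checking that rationalisation really does preserve the Postnikov tower, i.e.~that the $k$-invariants rationalise correctly; this is standard for nilpotent spaces of finite type but deserves explicit mention.
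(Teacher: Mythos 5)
The paper does not prove this proposition at all: it is quoted from Hilton--Mislin--Roitberg \cite{HiltonMilsinRoitberg} (Theorems II.2.6, II.3.11 and Corollary II.5.4(a)), and the proofs there run along exactly the lines you propose --- induction over a principal refinement of the Postnikov tower of $X$, using finiteness of $W$ to stabilise. So your strategy is the standard one; the question is whether your sketch closes the delicate points, and as written it does not. A smaller issue first: for nilpotency of the components you appeal to ``an extension of nilpotent groups by nilpotent groups with nilpotent action is nilpotent'', but nilpotency of a \emph{space} also requires the $\pi_1$-action on all higher homotopy groups of the component to be nilpotent. Here this holds because $\Map(W,X_n)\to\Map(W,X_{n-1})$ is pulled back from the path fibration over $\Map(W,K(A_n,k_n+1))$, which is a simple space, so the holonomy acts trivially on the homotopy of the fibre --- but that is the argument, and it needs to be said. (Also, when nonempty the fibre is homotopy equivalent to all of $\Map(W,K(A_n,k_n))$, not to a union of its components; this is harmless.)

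The genuine gap is the $\pi_0$ step, which you correctly flag as ``the main obstacle'' but then dispose of with ``the source is a finite-type nilpotent group and the target is a finitely generated abelian group, so finite-to-oneness passes along the sequence''. That inference is not valid: the set of components of $\Map(W,X_n)$ lying over a fixed component of $\Map(W,X_{n-1})$ is the quotient of $\pi_0$ of the fibre by the $\pi_1$-action of the base, and a finite-to-one map need not remain finite-to-one after passing to orbit sets (an infinite rational orbit could a priori contain infinitely many integral orbits). What is actually needed is the principal structure: the group $H^{k_n}(W;A_n)$ acts on $\pi_0\,\Map(W,X_n)$ with orbits exactly the fibres of $\pi_0\,\Map(W,X_n)\to\pi_0\,\Map(W,X_{n-1})$, and the stabiliser of a class is the image of a connecting map out of $\pi_1\bigl(\Map(W,X_{n-1})_{g}\bigr)$, a finitely generated nilpotent group whose comparison map to its rational counterpart is a rationalisation by your inductive hypothesis. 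One must then check that the rational stabiliser is the $\bb{Q}$-span of the integral one, so that its preimage in $H^{k_n}(W;A_n)$ contains the integral stabiliser with finite index; combined with the finite kernel of $H^{k_n}(W;A_n)\to H^{k_n}(W;A_n\otimes\bb{Q})$ this gives the finite fibre count. This stabiliser bookkeeping is the real content of the finite-to-one statement (it is where Corollary II.5.4(a) of \cite{HiltonMilsinRoitberg} earns its keep), and your sketch omits it.
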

	
\subsection{Relative homotopy automorphisms} The mapping spaces in the $\infty$-category $\icat{S}$ and the slice category $\icat{S}^{A/}$ can often be computed explicitly, as these $\infty$-categories are obtained from simplicial model categories. 

For topological spaces $X$ and $Y$, $\rm{Map}(X,Y)$ is the topological space of continuous maps $X \to Y$ in the compact open topology; if $X$ and $Y$ are (retracts of) CW-complexes these are the mapping spaces in $\icat{S}$. In $\icat{S}^{A/}$ we need to represent morphisms by a cofibration $A \to X$ between (retracts of) CW-complexes and then the mapping spaces are given by $\rm{Map}^{A/}(X,Y) \subseteq \rm{Map}(X,Y)$, the subspace of continuous maps $X \to Y$ under the identity of $A$. Similar, the following compute automorphism spaces in $\icat{S}$ and $\icat{S}^{A/}$ respectively:

\begin{definition} \qquad
	\begin{enumerate}[(i)]
	\item For a CW-complex $X$, $\rm{hAut}(X) \subseteq \rm{Map}(X,X)$ is the union of the homotopy-invertible path components.
	\item For a cofibration $A \to X$ of CW-complexes, $\rm{hAut}^{A/}(X) \subseteq \rm{Map}^{A/}(X,X)$ is the union of the homotopy-invertible path components.
	\end{enumerate} 
\end{definition}

If $X$ is a $1$-connected finite CW-complex, then $\pi_0\,\hAut(X_\bb{Q})$ is isomorphic to the $\bb{Q}$-points of a linear algebraic group (over $\bb{Q}$, per \cref{conv:q}). Several constructions of such an isomorphism exist: Sullivan \cite{Sullivan} (see \cite{BlockLazarev} for some additional details), Wilkerson \cite{Wilkerson,WilkersonErrata}, and Espic--Saleh \cite{EspicSaleh} (strictly speaking, they require a choice of basepoint but since $X$ is $1$-connected pointed homotopy classes coincide with homotopy classes). A priori, these need not coincide though we expect they do; at least Sullivan's and Espic--Saleh's coincide \cite[Theorem 4.14]{EspicSaleh}. In this paper we shall always consider this structure of a linear algebraic group on $\pi_0\,\hAut(X_\bb{Q})$, which we will recall in \cref{sec:haut-alg}.

The map of fibration sequence
\begin{equation}\label{eqn:fib-seq-haut} \begin{tikzcd} \Map^{A/}(X,X) \rar \dar& \Map(X,X) \rar \dar & \Map(A,X) \dar \\
\Map^{A_\bb{Q}/}(X_\bb{Q},X_\bb{Q}) \rar& \Map(X_\bb{Q},X_\bb{Q}) \rar & \Map(A_\bb{Q},X_\bb{Q})\end{tikzcd}\end{equation}
with fibers taken over $i$ or $i_\bb{Q}$, induce a map of exact sequences 
\[\begin{tikzcd}\cdots \rar & \pi_1\,\rm{Map}(A,X) \rar{\delta} \dar{r} & \pi_0 \rm{hAut}^{A/}(X) \rar{j} \dar{r} & \pi_0 \rm{hAut}(X) \dar \\
\cdots \rar & \pi_1\,\rm{Map}(A_\bb{Q},X_\bb{Q}) \rar{\delta_\bb{Q}} & \pi_0 \rm{hAut}^{A_\bb{Q}/}(X_\bb{Q}) \rar{j_\bb{Q}}  & \pi_0 \rm{hAut}(X_\bb{Q}) \end{tikzcd}\]
based at the identities, $i$, or $i_\bb{Q}$.
 
By \cite[Theorem 4.13]{EspicSaleh}, $\pi_0\, \rm{hAut}^{A_\bb{Q}/}(X_\bb{Q})$ is a linear algebraic group and by \cite[Theorem 4.15]{EspicSaleh} the homomorphism $j_\bb{Q}$ is given by taking $\bb{Q}$-points of a morphism of linear algebraic groups with unipotent kernel. By \cref{lem:alg-grp-constr} \eqref{enum:images-alg} and \cref{lem:unipotent-q-points}, $\rm{im}(j_\bb{Q})$ is given by the $\bb{Q}$-points of a linear algebraic group and by \cite[Proposition 5.4]{EspicSaleh} the homomorphism
\[\rm{im}(j) \lra \rm{im}(j_\bb{Q})\]
has finite kernel and image an arithmetic subgroup of $\rm{im}(j_\bb{Q})$. We use this to prove:

\begin{proposition}If $A \to X$ is a cofibration of $1$-connected finite CW-complexes, then the homomorphism 
	\[\pi_0\,\rm{hAut}^{A/}(X) \lra \pi_0\, \rm{hAut}^{A_\bb{Q}/}(X_\bb{Q})\]
has finite kernel and image an arithmetic subgroup of the linear algebraic group $\pi_0\, \rm{hAut}^{A_\bb{Q}/}(X_\bb{Q})$.\end{proposition}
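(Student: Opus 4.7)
The plan is to apply \cref{cor:borel-lemma} to the map of short exact sequences
\[
\begin{tikzcd}[column sep=small]
1 \rar & \ker(j) \rar \dar{r_1} & \pi_0\,\rm{hAut}^{A/}(X) \rar{j} \dar{r} & \rm{im}(j) \rar \dar{r_2} & 1 \\
1 \rar & \ker(j_\bb{Q}) \rar & \pi_0\,\rm{hAut}^{A_\bb{Q}/}(X_\bb{Q}) \rar{j_\bb{Q}} & \rm{im}(j_\bb{Q}) \rar & 1.
\end{tikzcd}
\]
The bottom row is the sequence of $\bb{Q}$-points of the short exact sequence of linear algebraic groups associated to $j_\bb{Q}$, whose kernel is unipotent by \cite[Theorem 4.12]{EspicSaleh}; hence it is exact on $\bb{Q}$-points by \cref{lem:unipotent-q-points}. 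The map $r_2$ has finite kernel and arithmetic image by the discussion preceding the proposition. The task therefore reduces to establishing the same two properties for $r_1$.

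For this I would use the commutative square induced by the long exact sequences of homotopy groups,
\[
\begin{tikzcd}
\pi_1\,\rm{Map}(A,X) \rar{\delta} \dar{r_A} & \ker(j) \dar{r_1} \\
\pi_1\,\rm{Map}(A_\bb{Q},X_\bb{Q}) \rar{\delta_\bb{Q}} & \ker(j_\bb{Q}),
\end{tikzcd}
\]
where $\delta,\delta_\bb{Q}$ are surjective onto their targets by the identifications $\ker(j) = \rm{im}(\delta)$ and $\ker(j_\bb{Q}) = \rm{im}(\delta_\bb{Q})$. By \cref{prop:rationalising-mapping-spaces,prop:rationalising-pi-1} the map $r_A$ has finite kernel and image an arithmetic subgroup of the $\bb{Q}$-points of the unipotent linear algebraic group $\pi_1\,\rm{Map}(A_\bb{Q},X_\bb{Q})$. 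Since $\ker(j_\bb{Q})$ is also unipotent, the Malcev correspondence (\cite[Chapter 6]{Segal}) identifies $\delta_\bb{Q}$ with a surjective morphism of linear algebraic groups, so by \cref{lem:arithm-constructions}\eqref{enum:image-arithm} the image of $r_1$, which equals $\delta_\bb{Q}(r_A(\pi_1\,\rm{Map}(A,X)))$, is arithmetic in $\ker(j_\bb{Q})$.

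The main obstacle is proving that $\ker(r_1)$ is finite. Writing $K = \ker(\delta)$, $K_\bb{Q} = \ker(\delta_\bb{Q})$, and $\Gamma = r_A(\pi_1\,\rm{Map}(A,X))$, a short diagram chase identifies $\ker(r_1) \cong r_A^{-1}(K_\bb{Q})/K$ and embeds it in a short exact sequence
\[
1 \to \ker(r_A)/(\ker(r_A) \cap K) \to \ker(r_1) \to (\Gamma \cap K_\bb{Q})/r_A(K) \to 1
\]
whose left term is finite. To conclude, it suffices to show both $\Gamma \cap K_\bb{Q}$ and $r_A(K)$ are arithmetic subgroups of $K_\bb{Q}$, for then they are commensurable and the right term is finite. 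Arithmeticity of $\Gamma \cap K_\bb{Q}$ is \cref{lem:arithm-constructions}\eqref{enum:intersection-arithm}. For $r_A(K)$, exactness of the fibration sequence gives $K = \rm{im}(i^* \colon \pi_1\,\rm{Map}(X,X) \to \pi_1\,\rm{Map}(A,X))$, so by naturality $r_A(K) = i^*_\bb{Q}(r_X(\pi_1\,\rm{Map}(X,X)))$, where $r_X$ has arithmetic image (by \cref{prop:rationalising-mapping-spaces,prop:rationalising-pi-1}) and $i^*_\bb{Q}$ is, again by Malcev, a morphism of unipotent linear algebraic groups. Then \cref{lem:arithm-constructions}\eqref{enum:image-arithm} yields the claim, and \cref{cor:borel-lemma} closes out the proof.
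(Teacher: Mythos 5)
Your reduction to \cref{cor:borel-lemma} via the short exact sequences for $j$ and $j_\bb{Q}$ is exactly the paper's, as is the use of the fibration sequence \eqref{eqn:fib-seq-haut}; where you genuinely diverge is in handling the map $r_1\colon \ker(j)\to\ker(j_\bb{Q})$. The paper observes that $\ker(j)$ is a finitely generated nilpotent quotient of $\pi_1\,\Map(A,X)$, invokes \cite[Corollary 2.6]{HiltonMilsinRoitberg} to see that $r_1$ is its rationalisation, and then gets unipotency of $\ker(j_\bb{Q})$, finiteness of the kernel and arithmeticity of the image all at once from \cref{prop:rationalising-pi-1} (plus uniqueness of unipotent structures to match with the kernel-of-$j_\bb{Q}$ structure). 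You instead take the unipotency of $\ker(j_\bb{Q})$ as input from \cite[Theorem 4.12]{EspicSaleh} and push arithmeticity through the connecting homomorphism: Malcev functoriality makes $\delta_\bb{Q}$ and $i^*_\bb{Q}$ algebraic, \cref{lem:arithm-constructions} transports arithmeticity, and your commensurability d\'evissage of $\ker(r_1)\cong r_A^{-1}(K_\bb{Q})/K$ gives finiteness; I checked the diagram chase and the extension
\[1 \to \ker(r_A)/(\ker(r_A)\cap K) \to \ker(r_1) \to (\Gamma\cap K_\bb{Q})/r_A(K)\to 1,\]
and they are correct (note $r_A(K)$ is normal in $\Gamma$ since $K$ is normal upstairs). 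Two steps deserve a line each of justification. First, \cref{lem:arithm-constructions} \eqref{enum:image-arithm} requires a \emph{surjective} morphism, and $i^*_\bb{Q}$ is not surjective onto $\pi_1\,\Map(A_\bb{Q},X_\bb{Q})$; you must corestrict to its algebraic image $I$ and check $I(\bb{Q})=K_\bb{Q}$, which follows from the rational long exact sequence together with \cref{lem:unipotent-q-points} (or Zariski density of $\bb{Q}$-points of unipotent groups), and similarly surjectivity of $\delta_\bb{Q}$ on $\bb{Q}$-points must be promoted to surjectivity of the algebraic morphism by density. Second, you use that an abstract group homomorphism between $\bb{Q}$-points of unipotent groups is automatically a morphism of linear algebraic groups; this is full faithfulness of the Malcev correspondence, which is indeed in \cite[Chapter 6]{Segal} but is a bit more than the uniqueness statement the paper records, so it should be stated explicitly. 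With those points spelled out your argument is complete; what the paper's route buys is brevity and the fact that unipotency of $\ker(j_\bb{Q})$ comes out of the argument rather than being imported, while yours trades the citation of \cite[Corollary 2.6]{HiltonMilsinRoitberg} for elementary manipulations with arithmetic subgroups.
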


\begin{proof}There is a map of short exact sequences
	\[\begin{tikzcd} 1 \rar & \ker(j) \rar \dar{r} & \pi_0\,\hAut^{A/}(X) \rar{j} \dar{r'} & \rm{im}(j) \dar{r''} \rar & 1 \\
	1 \rar & \ker(j_\bb{Q}) \rar & \pi_0\,\hAut^{A_\bb{Q}/}(X_\bb{Q}) \rar{j_\bb{Q}} & \rm{im}(j_\bb{Q}) \rar & 1 \end{tikzcd}\]
which satisfies most of the conditions of \cref{cor:borel-lemma} since kernels of morphisms of linear algebraic groups are again linear algebraic groups by \cref{lem:alg-grp-constr} \eqref{enum:kernels-alg}. We will prove  that $r$ has finite kernel and image an arithmetic subgroup, with respect to \emph{some} unipotent algebraic group structure on $\ker(j_\bb{Q})$. Since this linear algebraic group structure must coincide with the one that $\ker(j_\bb{Q})$ inherits as the kernel of $j_\bb{Q}$, because such structures are unique in the unipotent case \cite[Corollary 3.8 (1)]{Chatterjee}, this would complete the proof.

The long exact sequences of homotopy groups of \eqref{eqn:fib-seq-haut} provide a commutative diagram
\[\begin{tikzcd} \pi_1\,\hAut(X) \rar \dar{r_0} & \pi_1 \, \Map(A,X) \rar \dar{r_1} & \ker(j) \rar \dar{r} & 1 \\
\pi_1\,\hAut(X_\bb{Q}) \rar & \pi_1\,\Map(A_\bb{Q},X_\bb{Q}) \rar & \ker(j_\bb{Q}) \rar & 1.\end{tikzcd}\]
As a consequence of \cref{prop:rationalising-mapping-spaces}, the groups $\pi_1\,\hAut(X)$ and $\pi_1\,\Map(A,X)$ are finitely generated nilpotent and the maps $r_0$ and $r_1$ are their rationalisations. Thus $\ker(j)$ is also finitely generated nilpotent and by \cite[Corollary I.2.6]{HiltonMilsinRoitberg} the homomorphism $r$ is its rationalisation. By \cref{prop:rationalising-pi-1} this implies that $\ker(j_\bb{Q})$ admits the structure of a unipotent linear algebraic group, and that $r$ has finite kernel and image an arithmetic subgroup with respect to this structure.
\end{proof}

\begin{remark}That $\pi_0\,\rm{hAut}^{A/}(X)$ is commensurable up to finite kernel with an arithmetic group was claimed in \cite[Section 8]{Scheerer} and that it is of finite type can be deduced from \cite[Theorem 2.2]{Maruyama}. However, these papers provide few details (see the end of the introduction of \cite{EspicSaleh}).\end{remark}

\subsection{Homotopy automorphisms stabilising cohomology classes} In this section we prove:

\begin{proposition}\label{prop:action-algebraic} For a cofibration $A \to X$ of $1$-connected finite CW-complexes, the action of the linear algebraic group $\pi_0\,\hAut^{A_\bb{Q}/}(X_\bb{Q})$ on $H^*(X,A;\bb{Q})$ is algebraic.\end{proposition}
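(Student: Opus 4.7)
The plan is to leverage the Sullivan-model construction of the algebraic group structure on $\pi_0\,\hAut^{A_\bb{Q}/}(X_\bb{Q})$ that will be recalled in \cref{sec:haut-alg}. Fix a minimal relative Sullivan model
\[(\Lambda V, d) \hookrightarrow (\Lambda V \otimes \Lambda W, d)\]
of $A_\bb{Q} \to X_\bb{Q}$, where $V, W$ are graded $\bb{Q}$-vector spaces of finite type concentrated in degrees $\geq 2$ by the $1$-connectedness hypothesis. In the Espic--Saleh picture, $\pi_0\,\hAut^{A_\bb{Q}/}(X_\bb{Q})$ is realised as the $\bb{Q}$-points of a closed subgroup of $\rm{GL}(W_{\leq N})$ for $N$ large enough, via the action on generators modulo decomposables, and its induced action on each finite-dimensional graded piece of $\Lambda V \otimes \Lambda W$ is given by polynomial formulas in the corresponding matrix entries.

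The first step is to identify $H^*(X, A; \bb{Q})$ inside this framework. Since $X$ and $A$ are $1$-connected of finite type, rationalisation induces an isomorphism $H^*(X, A; \bb{Q}) \cong H^*(X_\bb{Q}, A_\bb{Q}; \bb{Q})$, and the latter is computed by the quotient cdga $(\Lambda W, \ol d) \coloneqq (\Lambda V \otimes \Lambda W)/(\Lambda V^+ \cdot (\Lambda V \otimes \Lambda W))$. Any cdga automorphism of $\Lambda V \otimes \Lambda W$ fixing $\Lambda V$ descends to a cdga automorphism of $(\Lambda W, \ol d)$, and the induced map on cohomology recovers the action we wish to study.

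The second step is the algebraicity check. In each degree, $\Lambda W$ is a finite-dimensional $\bb{Q}$-vector space built out of $W_{\leq N}$ using direct sums, tensor products and symmetric powers, so by \cref{lem:alg-act-constructions} \eqref{enum:alg-rep-tensor} the natural $\rm{GL}(W_{\leq N})$-representation on it is algebraic, and hence so is its restriction to $\pi_0\,\hAut^{A_\bb{Q}/}(X_\bb{Q})$. The differential $\ol d$ is a fixed $\bb{Q}$-linear map independent of the automorphism, so the subspaces of cocycles and coboundaries are invariant, and the quotient $H^*(X, A; \bb{Q})$ is an algebraic representation by \cref{lem:alg-act-constructions} \eqref{enum:alg-rep-sub-quot}.

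The main obstacle is the compatibility between the topological action on $H^*(X, A; \bb{Q})$ and the Sullivan-model action on $H^*(\Lambda W, \ol d)$: this amounts to the naturality of the quasi-isomorphism from relative rational cochains to the minimal relative model with respect to relative homotopy automorphisms, and it should be folded into the setup of \cref{sec:haut-alg} (where the Espic--Saleh construction is spelled out). Once this naturality is in hand, the subquotient argument above is essentially formal.
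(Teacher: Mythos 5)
Your overall template---let the strict automorphism group of a (relative) minimal model act on an explicit cochain complex, invoke \cref{lem:alg-act-constructions} to see that each term is an algebraic representation, and pass to cohomology via subquotients---is the same as the paper's, but the Sullivan-side implementation has concrete errors. First, the variance is wrong: a relative Sullivan model of $A_\bb{Q} \to X_\bb{Q}$ is a factorisation of the restriction map $A_{\rm{PL}}(X_\bb{Q}) \to A_{\rm{PL}}(A_\bb{Q})$, so in $(\Lambda V,d)\hookrightarrow (\Lambda V\otimes \Lambda W,d)$ the base $\Lambda V$ models $X$ and the total algebra models $A$; cdga automorphisms of $\Lambda V\otimes\Lambda W$ fixing $\Lambda V$ therefore model homotopy automorphisms of $A$ \emph{over} $X$, not $\pi_0\,\hAut^{A_\bb{Q}/}(X_\bb{Q})$. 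The paper sidesteps this by working covariantly with relative minimal dg-Lie algebras $M \to N$ and $\Aut_M(N)$, following Espic--Saleh; this matters beyond aesthetics, because the algebraic group structure in the statement is by definition the Espic--Saleh (Quillen-model) one, so algebraicity has to be verified for that structure, and a Sullivan-model structure would require a separate comparison which is only available in the absolute case. Second, the coefficients are misidentified: the quotient cdga $(\Lambda W,\ol{d})$ computes the cohomology of a homotopy \emph{fibre}, not $H^*(X,A;\bb{Q})$ (test $S^2 \hookrightarrow D^3$, where $H^*(X,A;\bb{Q})$ is $\bb{Q}$ in degree $3$ but the fibre is $S^2$). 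Relative cohomology must instead be taken as a (desuspended) mapping cone of cochain complexes, which is exactly how the paper defines $C^*_{\rm{CE}}(N,M)$ as the fibre of $C^*_{\rm{CE}}(N) \to C^*_{\rm{CE}}(M)$.

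Third, $\pi_0\,\hAut^{A_\bb{Q}/}(X_\bb{Q})$ is not realised as (the $\bb{Q}$-points of) a closed subgroup of $\rm{GL}(W_{\leq N})$ acting through generators modulo decomposables: it is a \emph{quotient} of the strict automorphism group of the model by the unipotent subgroup of automorphisms homotopic to the identity, the map to $\rm{GL}$ of the indecomposables need not be injective, and the quotient group does not act on the cochain level at all. The correct order of operations, which is the paper's, is to check algebraicity for the full automorphism group acting on the model, observe that cocycles and coboundaries are subrepresentations because these automorphisms commute with the differential, and then note that the induced action on cohomology factors through the quotient (automorphisms homotopic to the identity act trivially on cohomology), so the quotient's action is algebraic. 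Finally, the compatibility you defer at the end---matching the topological action on $H^*(X,A;\bb{Q})$ with the model-level action---is not part of the construction recalled in \cref{sec:haut-alg}; the paper supplies it separately via the natural equivalence $A \simeq C^*_{\rm{CE}}\circ\cal{L}$ (Majewski) together with naturality of the mapping-cone construction, and your argument needs an analogous step rather than a promissory note.
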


Before doing so, let us draw a consequence. 

\begin{definition}
	For a cofibration $A \to X$ of CW-complexes and classes $h_1,\ldots,h_r$ in $H^*(X,A;\bb{Q})$, $\hAut^{A/}(X)_h \subset \hAut^{A/}(X)$ is the union of the path components fixing each of the $h_i$.
\end{definition}

If $A$ and $X$ are 1-connected of finite type then the map $H^*(X_\bb{Q},A_{\bb{Q}};\bb{Q}) \to H^*(X,A;\bb{Q})$ is an isomorphism. Then we can transfer the $h_1,\ldots,h_r$ to the left term, denoting them the same, and define 
\[\hAut^{A_{\bb{Q}}/}(X_{\bb{Q}})_h \subset \hAut^{A_\bb{Q}/}(X_\bb{Q}).\] 
Its path components are a linear algebraic subgroup of $\pi_0\,\hAut^{A_{\bb{Q}}/}(X_{\bb{Q}})$, since it is a finite intersection of stabilisers of elements in an algebraic representation and we may invoke \cref{lem:alg-grp-constr} \eqref{enum:intersection-alg-subgrp} and \eqref{enum:stabilisers-alg}.

\begin{corollary}\label{cor:haut-fixing-cohomology} If $A \to X$ is a cofibration of $1$-connected finite CW-complexes and $h_1,\ldots,h_r$ are finitely many rational cohomology classes, then the homomorphism
	\[\pi_0\,\hAut^{A/}(X)_h  \lra \pi_0\,\hAut^{A_{\bb{Q}}/}(X_{\bb{Q}})_h\]
has finite kernel and image an arithmetic subgroup.
\end{corollary}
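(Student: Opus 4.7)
The plan is to realise $\pi_0\,\hAut^{A/}(X)_h$ as the preimage under the rationalisation homomorphism $r \colon \pi_0\,\hAut^{A/}(X) \to \pi_0\,\hAut^{A_\bb{Q}/}(X_\bb{Q})$ of the algebraic subgroup $\pi_0\,\hAut^{A_\bb{Q}/}(X_\bb{Q})_h$, and then combine the previous proposition (which says $r$ has finite kernel and arithmetic image) with \cref{lem:arithm-constructions} \eqref{enum:intersection-arithm}.

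First I would verify that the condition of fixing $h_1,\ldots,h_r$ transfers cleanly along rationalisation. The map $r \colon (X,A) \to (X_\bb{Q}, A_\bb{Q})$ induces an isomorphism $r^* \colon H^*(X_\bb{Q}, A_\bb{Q};\bb{Q}) \xrightarrow{\cong} H^*(X,A;\bb{Q})$, and for any $\phi \in \hAut^{A/}(X)$ with rationalisation $\phi_\bb{Q} \in \hAut^{A_\bb{Q}/}(X_\bb{Q})$, the relation $r \circ \phi \simeq \phi_\bb{Q} \circ r$ of maps of pairs yields $\phi^* \circ r^* = r^* \circ \phi_\bb{Q}^*$. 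Since $r^*$ is an isomorphism, $\phi^*(h_i) = h_i$ if and only if $\phi_\bb{Q}^*$ fixes the class corresponding to $h_i$ under $r^*$. Hence
\[\pi_0\,\hAut^{A/}(X)_h \;=\; r^{-1}\bigl(\pi_0\,\hAut^{A_\bb{Q}/}(X_\bb{Q})_h\bigr).\]

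With this identification in place, the restricted homomorphism $\pi_0\,\hAut^{A/}(X)_h \to \pi_0\,\hAut^{A_\bb{Q}/}(X_\bb{Q})_h$ has kernel contained in $\ker(r)$, which is finite by the previous proposition. Its image equals $\rm{im}(r) \cap \pi_0\,\hAut^{A_\bb{Q}/}(X_\bb{Q})_h$, which is the intersection of an arithmetic subgroup of $\pi_0\,\hAut^{A_\bb{Q}/}(X_\bb{Q})$ with the $\bb{Q}$-points of a closed algebraic subgroup; the latter being algebraic follows from \cref{prop:action-algebraic} combined with \cref{lem:alg-grp-constr} \eqref{enum:intersection-alg-subgrp} and \eqref{enum:stabilisers-alg}, as noted in the paragraph just before the corollary. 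Applying \cref{lem:arithm-constructions} \eqref{enum:intersection-arithm} to this intersection gives that the image is arithmetic in $\pi_0\,\hAut^{A_\bb{Q}/}(X_\bb{Q})_h$.

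I do not expect any genuine obstacle: the entire content is packaged into the previous proposition and into \cref{prop:action-algebraic}, and the argument is essentially the formal observation that an arithmetic subgroup of $G(\bb{Q})$ restricts to an arithmetic subgroup of $H(\bb{Q})$ for any closed algebraic $H \subseteq G$. The only point requiring a moment of care is the naturality square ensuring that fixing $h_i$ downstairs on $X$ matches fixing it upstairs on $X_\bb{Q}$, which is immediate once one notes that $r^*$ is an isomorphism on relative rational cohomology.
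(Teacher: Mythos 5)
Your argument is correct and is essentially the paper's own proof: the kernel is contained in the (finite) kernel of the unrestricted rationalisation map, and the image is identified with the intersection of the arithmetic image of $\pi_0\,\hAut^{A/}(X)$ with the closed algebraic subgroup $\pi_0\,\hAut^{A_\bb{Q}/}(X_\bb{Q})_h$, to which \cref{lem:arithm-constructions} \eqref{enum:intersection-arithm} applies. The naturality check that fixing the $h_i$ corresponds under the isomorphism $H^*(X_\bb{Q},A_\bb{Q};\bb{Q})\cong H^*(X,A;\bb{Q})$ is left implicit in the paper but is exactly the intended justification.
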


\begin{proof}Its kernel is a subgroup of the kernel of $\pi_0\,\hAut^{A/}(X) \to \pi_0\,\hAut^{A_{\bb{Q}}/}(X_{\bb{Q}})$. Its image is the intersection of the image of $\pi_0\,\hAut^{A/}(X)$ in $\pi_0\,\hAut^{A_{\bb{Q}}/}(X_{\bb{Q}})$ with $\pi_0\,\hAut^{A_{\bb{Q}}/}(X_{\bb{Q}})_h$, and we may invoke \cref{lem:arithm-constructions} \eqref{enum:intersection-arithm}.\end{proof}

It will take some effort to prove \cref{prop:action-algebraic}, as we must discuss the isomorphism of $\pi_0 \,\hAut^{A_\bb{Q}/}(X_\bb{Q})$ with the $\bb{Q}$-points of a linear algebraic group. For the expert, however, we can summarise the argument in a single unsurprising sentence: the linear algebraic group of automorphisms of a minimal Quillen model acts algebraically on its Chevalley--Eilenberg cohomology.

\subsubsection{Recollection of rational homotopy theory} To prove \cref{prop:action-algebraic} we first recall the basics of rational homotopy theory. It concerns the commuting diagram of equivalences of $\infty$-categories
\begin{equation}\label{eqn:rat-hmtp}\begin{tikzcd} & \icat{S}^{\fin,>1}_{\ast,\bb{Q}} \arrow{rd}{A}[swap]{\simeq} \arrow{ld}{\simeq}[swap]{\cal{L}}& \\
\dgLie^{\fin,>0} \arrow[shift left=.5ex]{rr}{C^*_\rm{CE}} & & \dgCom^{\fin,>1}_{\aug} \arrow[shift left=.5ex]{ll}{C^*_\rm{Har}}\end{tikzcd}\end{equation}
Here the notation is as follows:
\begin{enumerate}
	\item $ \icat{S}^{\fin,>1}_{\ast,\bb{Q}}$ is the $\infty$-category of $1$-connected pointed rational spaces of finite type.
	\item $\dgLie^{\fin,>0}$ is the $\infty$-category of $0$-connected dg-Lie algebras of finite type.
	\item $\dgCom^{\fin,>1}_{\aug}$ is the $\infty$-category of $1$-connected augmented dg-commutative algebras of finite type.
	\item $\cal{L}$ is the functor denoted $\lambda$ in \cite{QuillenRational}.
	\item $C^*_\rm{CE}$ is the Lie algebra cohomology functor and $C^*_\rm{Har}$ is the commutative algebra cohomology functor.
	\item $A$ is the functor denoted $A^*_\rm{PL}$ in \cite{Sullivan}. 
\end{enumerate}

To justify this, we need some results from the literature phrased in terms of model categories:

\begin{enumerate}
	\item In \cite[Section II.2]{QuillenRational}, Quillen constructs a model structure on the category $\cat{S}_2$ of $1$-reduced simplicial sets with weak equivalences those maps inducing an isomorphism on rational homology; there is an equivalence of $\infty$-categories $\smash{\icat{S}^{>1}_{\ast,\bb{Q}}} \simeq (\smash{\cat{S}_2^\rm{Q}})^\circ$ where $(-)^\circ$ denote the (coherent nerve of) the $\cat{Kan}$-enriched category of cofibrant-fibrant objects as in \cite{LurieHTT}, and we can restrict to the full subcategory of objects of finite type.
	\item In \cite[Section II.5]{QuillenRational}, Quillen constructed a model structure on the category $\cat{DGL}_1$ of reduced dg-Lie algebras with weak equivalences those maps inducing an isomorphism on homology; there is an equivalence of $\infty$-categories $\dgLie^{>0} \simeq (\cat{DGL}_1^\rm{Q})^\circ$, and we can restrict to the full subcategory of objects of finite type.
	\item In \cite[\S 4]{BousfieldGugenheim}, Bousfield and Gugenheim constructed a model structure on the category $\cat{A}_0$ of  augmented dg-commutative algebras with weak equivalences those maps that induce an isomorphism on homology; there is an equivalence of $\infty$-categories $\dgCom_\aug \simeq (\cat{A}^\rm{BK}_0)^\circ$, and we can restrict to the full subcategory of 1-connected objects of finite type.
	\item In \cite[Theorem II]{QuillenRational}, Quillen exhibits $\lambda$ as arising from a zigzag of Quillen equivalences between $\cat{S}_2^\rm{Q}$ and $\smash{\cat{DGL}_1^\rm{Q}}$. Upon restriction to appreciate subcategories, this yields an equivalence of $\infty$-categories $\icat{S}^{>1}_{\ast,\bb{Q}} \simeq \dgLie^{\fin,>0}$.
	\item The functors $C^*_\rm{CE}$ and $C^*_\rm{Har}$ can described explicitly in terms of the Chevalley--Eilenberg and Harrison cochain complexes, and that they are mutually inverse equivalences is essentially contained in \cite[Appendix B, Theorem 7.5]{QuillenRational}. The difference is that Quillen does not dualise, but rather works with coalgebras. This is necessary in the general case, but under finite type assumptions dualisation is an equivalence.
	\item Bousfield and Gugenheim exhibited $A^*_\rm{PL}$ as a part of a Quillen adjunction between $\smash{\cat{S}^\rm{Q}_\ast}$, the model category of pointed simplicial sets with the Quillen model structure, and $\cat{A}^\rm{BK}_0$ \cite[\S8]{BousfieldGugenheim}. They also proved it becomes a Quillen equivalence when restricted to 1-connected objects of finite type \cite[\S9]{BousfieldGugenheim}, and hence it induces an equivalence of $\infty$-categories $\icat{S}^{>1}_{\ast,\bb{Q}} \simeq \dgCom_\aug^{\fin,>1}$.
	\item The commutativity of the diagram is the one of the main results of \cite{Majewski}. To see this, consider the chain of equivalences of dg-commutative algebras on page xvii of loc.cit.: it connects the universal enveloping algebra $U\cal{L}(X)$ (Majewski's $\lambda G$ is Quillen's $\lambda$) to a variant $F^*A(X)$ of the linear dual of the bar construction on $A(X)$. This gives the desired result upon applying primitives, after noting that (a) by \cite[Lemma 4.43]{Majewski} for finite commutative dg-algebras the construction $F^*$ agrees up to equivalence with the linear dual of the bar construction, (b) $P$ is inverse to $U$ up to equivalence \cite[Theorem 4.8]{QuillenRational}, (c) the Harrison cochain complex can be obtained as primitives in the dual of the bar construction by noting that under finite type assumptions this is equivalent to the cobar construction of the dual and then we can apply \cite[Appendix B, Remark 6.6]{QuillenRational}. 
\end{enumerate}

\subsubsection{Homotopy automorphisms as a linear algebraic group} \label{sec:haut-alg} Let us first recall why $\pi_0\,\hAut_*(X)$ admits the structure of a linear algebraic group for $X \in \icat{S}^{\fin,>1}_{\ast,\bb{Q}}$. 

In any $\infty$-category $\icat{C}$, equivalences induce isomorphisms in the homotopy category $\rm{ho}\,\icat{C}$. Since isomorphic objects have isomorphic automorphism groups, we conclude that an equivalence $X \simeq X'$ yields an isomorphism of groups
\[\pi_0\,\hAut_\icat{C}(X) \cong \pi_0\,\hAut_\icat{C}(X'),\]
where $\hAut_\icat{C}(X) \subseteq \Map_\icat{C}(X,X)$ denotes the union of the path components invertible under composition, and similarly for $X'$.

To compute these automorphism groups in $\dgLie^{\fin,>0}$, one uses \emph{minimal} dg-Lie algebras as in \cite[Definition, p.~311]{FHT}. A minimal dg-Lie algebra is of the form $(\bb{L}(U),d)$ where $\bb{L}(U)$ is a free graded Lie algebra on a graded vector space of finite type concentrated in positive degrees, and differential $d$ has zero linear part. Every dg-Lie algebra $L \in \dgLie^{\fin,>0}$ admits a minimal model, by definition a quasi-isomorphism $M \to L$ with $M$ minimal. In particular, there is an isomorphism of groups
\[\pi_0\, \hAut_\dgLie(L) \cong \pi_0\,\hAut_\dgLie(M).\]
The crucial property of minimal dg-Lie algebras is then that the group $\pi_0\,\hAut_\dgLie(M)$ can be computed explicitly:
\[\pi_0\,\hAut_\dgLie(M) = \Aut(M)/\{\text{homotopic to $\rm{id}_{M}$}\},\]
the quotient of the group $\Aut(M)$ of automorphisms of $M$ as a dg-Lie algebra, by the subgroup of automorphisms homotopic to the identity. To see this, one first uses that minimal dg-Lie algebras are both cofibrant and fibrant (in fact, all objects are fibrant) in Quillen's model structure on dg-Lie algebras, so their $\infty$-categorical automorphism spaces agree with the model-categorical simplicial automorphism spaces. Next one can identify the path components of the model-categorical simplicial automorphism spaces using the techniques of \cite{EspicSaleh}, where this is not explicitly stated but its proof follows by the same methods (see in particular Corollary 4.8 of loc.cit.).

If the graded vector space $U$ of generators of $M$ is finite-dimensional, then the group $\Aut(M)$ is isomorphic to the $\bb{Q}$-points of a linear algebraic group, and under this isomorphism the subgroup of automorphisms homotopic to the identity is given by the $\bb{Q}$-points of a unipotent closed subgroup. Using \cref{lem:unipotent-q-points}, we conclude that $\pi_0\,\hAut_\dgLie(M)$ is isomorphic to the $\bb{Q}$-points of a linear algebraic group as well. Moreover, by construction its action on $M$ is algebraic.

\medskip

To see that $\pi_0\,\hAut_*(X)$ is a linear algebraic group for $X \in \icat{S}^{\fin,>1}_{\ast,\bb{Q}}$, we choose a minimal model $M \to \cal{L}(X)$ and by the above procedure obtain a pair of isomorphisms of groups
\[\Aut(M)/\{\text{homotopic to $\rm{id}$}\} \cong \pi_0\,\hAut_\dgLie(\cal{L}(X)) \cong \pi_0\,\hAut_*(X),\]
with left one as explained above, and right one a consequence of \eqref{eqn:rat-hmtp}.

\subsubsection{Relative homotopy automorphisms as a linear algebraic group}
We can similarly study morphisms $i \colon A \to X$ in $\icat{C}$: we define $\smash{\Map_\icat{C}^{A/}}(X,X)$ as the mapping space of $i$ in the slice $\infty$-category $\icat{C}^{A/}$ and let 
\[\hAut_\icat{C}^{A/}(X) \subseteq\smash{\Map_\icat{C}^{A/}}(X,X)\]
denote the union of the invertible path components.  Then an equivalence $i \simeq i'$ in $\icat{C}^{A/}$ induces an isomorphism of groups
\[\pi_0\,\hAut^{A/}_\icat{C}(X,X) \cong \pi_0\,\hAut^{A'/}_\icat{C}(X').\]

To compute automorphism groups of morphisms in $\dgLie$, one uses \emph{relative minimal} dg-Lie algebras, as explained in \cite[Section 3, 4]{EspicSaleh}. They prove that every morphism $g \colon L \to K$ in $\dgLie^{\fin,>0}$ admits a relative minimal model, given by a commuting diagram
\[\begin{tikzcd} M = (\bb{L}(U),d) \rar{\simeq} \dar & L \dar{g} \\
	N = (\bb{L}(U \oplus V),d) \rar{\simeq} & K \end{tikzcd}\]
with top map a minimal model, $V$ a finite dimensional graded vector space, and bottom differential satisfying some conditions that we do not need to spell out. These are the cofibrant and fibrant objects in the slice model structure obtained from Quillen's model structure, and arguing as in the absolute case there is an isomorphism
\[\pi_0\,\hAut_\dgLie^{M/}(N) \cong \Aut_{M}(N)/\{\text{homotopic to $\rm{id}_{N}$ rel $M$}\},\]
the quotient of the group $\Aut_{M}(N)$ of automorphisms of $N$ as a dg-Lie algebra fixing $M$ pointwise, by the subgroup of such isomorphisms homotopic to the identity rel $M$. Once more, $\Aut_{M}(N)$ is isomorphic to the $\bb{Q}$-points of a linear algebraic group \cite[Proposition 4.10]{EspicSaleh}, and under this isomorphism the subgroup of automorphisms homotopic to the identity rel $\rm{id}_{M}$ is given by $\bb{Q}$-points of a unipotent closed subgroup \cite[proof of Theorem 4.13]{EspicSaleh}. Moreover, the action of this group on the morphism $M \to N$ is by the identity on $M$ and induced by the action of $\Aut(N)$ on $N$; both are algebraic actions. One concludes that $\pi_0\,\smash{\hAut_\dgLie^{M/}}(N)$ is given by the $\bb{Q}$-points of a linear algebraic group.

\medskip

To see that $\pi_0\,\hAut^{A/}(X)$ is given by the $\bb{Q}$-points of a linear algebraic group for a morphism $f \colon A \to X$ in $\icat{S}^{\fin,>1}_{\ast,\bb{Q}}$, we represent it by a cofibration and choose a minimal model $M \to N$ for $\cal{L}(A) \to \cal{L}(X)$. By the above procedure we obtain a pair of isomorphisms of groups
\[\Aut_M(N)/\{\text{homotopic to $\rm{id}_N$ rel $M$}\} \cong \pi_0\,\hAut^{\cal{L}(A)/}_\dgLie(\cal{L}(X)) \cong \pi_0\,\hAut^{A/}(X).\]

\subsubsection{Action on cohomology} There is a forgetful functor of $\infty$-categories
\[U \colon \dgCom^{\fin,>1}_\aug \lra \Ch_\bb{Q}\] sending a dg-commutative algebra to its underlying cochain complex of $\bb{Q}$-vector spaces (cochain complexes are identified with chain complexes by negation of grading). When there is no risk of confusion, we will use the same notation for a dg-commutative algebra and its underlying cochain complex. In particular, given a map $g \colon L \to K$ of dg-Lie algebras, we write 
\[C^*_\rm{CE}(K,L) \coloneqq \rm{fib}_{\Ch_\bb{Q}}(C^*_\rm{CE}(K) \to C^*_\rm{CE}(L)).\]
Note the notation neglects to reflect that this depend on $g$. 


\begin{warning}One is tempted to call $H^*_\rm{CE}(g)$ ``relative Lie algebra cohomology'', but it is \emph{not} what is referred to as relative Lie algebra cohomology in e.g.~\cite[Section 22]{ChevalleyEilenberg}.
\end{warning}

Since $U \circ C^*_\rm{CE}$ is a functor, for a morphism $L \to K$ we get an action 
	\[\pi_0\,\hAut^{L/}_{\dgLie}(K) \times H^*_\rm{CE}(K,L) \lra H^*_\rm{CE}(K,L).\]
If the morphism is a minimal relative dg-Lie algebra $M \to N$, we may identify the group acting with $\Aut_M(N)/\{\text{homotopic to $\rm{id}_N$ rel $M$}\}$ and we may ask whether this action is algebraic. This is indeed the case:   

\begin{lemma}If $M \to N$ is a minimal relative dg-Lie algebra, then the action of the group $\Aut_M(N)/\{\text{homotopic to $\rm{id}_N$ rel $M$}\}$ on $H^*_\rm{CE}(N,M)$ is algebraic.\end{lemma}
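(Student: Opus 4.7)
The plan is to exhibit the action on $H^*_\rm{CE}(N,M)$ as a subquotient of an algebraic action of $\Aut_M(N)$ on the Chevalley--Eilenberg cochain complex, and then to descend this action to the quotient by the normal closed subgroup of automorphisms homotopic to $\rm{id}_N$ rel $M$. The starting point is the explicit description of $C^*_\rm{CE}(N)$ as the free graded commutative algebra on $sN^\vee$, equipped with a differential assembled from the differential and bracket of $N$. Since $N$ is of finite type and concentrated in positive degrees, each $C^p_\rm{CE}(N)$ is a finite-dimensional $\bb{Q}$-vector space built from finitely many graded pieces of $N$ by grading shift, linear dualisation, and finite exterior powers; since $\Aut_M(N)$ acts algebraically on $N$ degreewise, \cref{lem:alg-act-constructions} \eqref{enum:alg-rep-tensor}--\eqref{enum:alg-rep-dual} supplies an algebraic action on each $C^p_\rm{CE}(N)$. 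Elements of $\Aut_M(N)$ fix $M$ pointwise, so the restriction $C^*_\rm{CE}(N) \to C^*_\rm{CE}(M)$ is $\Aut_M(N)$-equivariant with trivial action on the target, and because the underlying inclusion $M \hookrightarrow N$ splits as a map of graded vector spaces, this restriction is degreewise surjective. Its kernel represents the fiber $C^*_\rm{CE}(N,M)$ and presents each $C^p_\rm{CE}(N,M)$ as a subrepresentation of $C^p_\rm{CE}(N)$.

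Next I would pass to cohomology. Because an automorphism of $N$ as a dg-Lie algebra commutes with both the differential and the bracket on $N$, the Chevalley--Eilenberg differential is $\Aut_M(N)$-equivariant---equivalently, $C^*_\rm{CE}$ is a functor---so in each cohomological degree the group $H^p_\rm{CE}(N,M) = \ker(d^p_\rm{CE})/\rm{im}(d^{p-1}_\rm{CE})$ is a subquotient of the algebraic representation $C^p_\rm{CE}(N,M)$, and hence inherits an algebraic $\Aut_M(N)$-action by \cref{lem:alg-act-constructions} \eqref{enum:alg-rep-sub-quot}. Homotopic chain maps induce equal maps on cohomology, so the normal closed subgroup of automorphisms homotopic to $\rm{id}_N$ rel $M$ acts trivially on $H^p_\rm{CE}(N,M)$ and the action therefore factors through the quotient linear algebraic group; by the universal property of quotients by closed normal subgroups (\cref{lem:alg-grp-constr} \eqref{enum:quotients-alg}) the factored homomorphism is itself a morphism of linear algebraic groups, which is the desired conclusion.

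I expect the only step that is not purely functorial to be the descent to the quotient, but this is handled by the universal property of algebraic quotients; the remaining work amounts to recognising $C^p_\rm{CE}$ as a tensorial construction on $N$ and tracking equivariance degreewise.
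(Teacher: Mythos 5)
Your proof is correct and follows essentially the same route as the paper: identify each degree of the relative Chevalley--Eilenberg complex as an algebraic representation of $\Aut_M(N)$ built from duals and exterior powers of $N$ (with trivial action on the $C^*_\rm{CE}(M)$ side), pass to cohomology as a subquotient via \cref{lem:alg-act-constructions}, and descend to the quotient group by the universal property. The only cosmetic difference is that you model the fiber $C^*_\rm{CE}(N,M)$ as the kernel of the (degreewise surjective) restriction map rather than as a shifted mapping cone, which changes nothing in the argument.
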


\begin{proof}First, we observe that by the universal property of quotients it suffices to prove that action of $\Aut_M(N)$ is algebraic. We will verify this using the explicit construction of the Chevalley--Eilenberg cochain complex \cite[Section 14]{ChevalleyEilenberg}. Since this  construction is natural, if a dg-Lie algebra or morphism of dg-Lie algebra comes with an action then the entries of the cochain complex are representations and the differential is equivariant. Since algebraic representations are closed under images, kernels, and quotients by \cref{lem:alg-act-constructions} \eqref{enum:alg-rep-sub-quot}, it suffices to prove that the entries of $C_\rm{CE}^*(N,M)$ are algebraic representations. We recall that in degree $p$ the Chevalley--Eilenberg cochain complex of $N$ is given by
	\[C^p_\rm{CE}(N) = \Hom_\bb{Q}(\Lambda^p N,\bb{Q}).\]
	Its differential will not matter for this argument. Then $C^*_\rm{CE}(N,M)$ is constructed as a shift of the mapping cone of the cochain map 
	\[C^*_\rm{CE}(N) \lra C^*_\rm{CE}(M)\]
	induced by restriction.	The action of $\Aut_M(N)$ on this cochain map is the evident one: on $C^*_\rm{CE}(N)$ it acts via the inclusion $\Aut_M(N) \subseteq \Aut(N)$ through its action on $N$, and on $C^*_\rm{CE}(M)$ it is the identity. The latter is evidently algebraic. For the former, we need that $\Aut(N)$ acts algebraically on $N$, and that exterior and symmetric powers preserve algebraic representations, as well as linear duals, by \cref{lem:alg-act-constructions} \eqref{enum:alg-rep-tensor} and \eqref{enum:alg-rep-dual}.
\end{proof}

This implies that $\pi_0\,\hAut^{A/}(X)$ acts algebraically on $H^*(X,A)$, as the equivalence of functors $A \simeq C^*_\rm{CE} \circ \cal{L}$ shows that the rational cohomology of a space $X \in \cat{S}^{\fin,>1}_{\ast,\bb{Q}}$ is naturally isomorphic to the Chevalley--Eilenberg cohomology of $\cal{L}(X)$. Since relative cohomology of spaces is also constructed through a mapping cone of cochain complexes, we similarly get a natural identification
\[H^*_\rm{CE}(\cal{L}(X),\cal{L}(A)) \cong H^*(X,A)\]
for a morphism $f \colon A \to X$. This completes the proof of \cref{prop:action-algebraic}.

\section{Homeomorphisms} To deduce \cref{athm:main} from \cref{cor:haut-fixing-cohomology} is standard, and we will outline it here.

\subsection{Cerf's theorem and surgery theory} We go from homotopy automorphisms to homeomorphisms through surgery theory and pseudoisotopy theory. We avoid repeating the definitions, just stating the necessary results. 

First, we have a result of Cerf \cite{Cerf}, true even in dimension $5$, see \cite[p.\,11]{HatcherWagoner} for a reference in the case of manifolds with boundary. These results are stated in terms diffeomorphisms, but remain true for homeomorphisms \cite[p.\,7]{HatcherConcordance}, and say that the space of concordance diffeomorphisms is path-connected. This proves the injectivity of the homomorphism in the next theorem, surjectivity being true by the definition of pseudoisotopy classes.

\begin{theorem}\label{thm:cerf} If $M$ is a $1$-connected compact topological manifold of dimension $d \geq 5$, then the natural homomorphism
	\[\pi_0\,\Homeo_\partial(M) \lra \widetilde{\pi}_0\,\Homeo_\partial(M)\]
is an isomorphism, where $\widetilde{\pi}_0\,\Homeo_\partial(M)$ denotes the group of pseudoisotopy classes of homeomorphisms of $M$ fixing a neighbourhood of $\partial M$ pointwise.\end{theorem}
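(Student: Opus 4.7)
The plan is to establish surjectivity and injectivity separately, with the latter being the substantive direction. Surjectivity is automatic from the definitions: the canonical map $\pi_0\,\Homeo_\partial(M) \to \widetilde{\pi}_0\,\Homeo_\partial(M)$ sends an isotopy class to the pseudoisotopy class containing it, and every pseudoisotopy class is by definition an equivalence class of honest homeomorphisms, hence hit by some isotopy class.

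For injectivity, suppose $f, g \in \Homeo_\partial(M)$ become equal in $\widetilde{\pi}_0\,\Homeo_\partial(M)$, i.e.\ there is a homeomorphism $F \colon M \times I \to M \times I$ fixing $M \times \{0\} \cup \partial M \times I$ pointwise with $F|_{M \times \{1\}} = g \circ f^{-1}$. By the group structure on both sides it suffices, replacing $F$ by an appropriate composite, to prove that any such pseudoisotopy of the identity restricts at the top slice to a homeomorphism isotopic to $\rm{id}_M$ through elements of $\Homeo_\partial(M)$. The key input is the topological, dimension $\geq 5$, boundary-aware version of Cerf's pseudoisotopy theorem: under the hypothesis that $M$ is simply connected of dimension $\geq 5$, the space $\cal{P}(M)$ of self-homeomorphisms of $M \times I$ fixing $M \times \{0\} \cup \partial M \times I$ pointwise is path-connected. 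This is Cerf's original result \cite{Cerf} in the smooth case of dimension $\geq 6$, extended to dimension $5$ with boundary by Hatcher--Wagoner (cf.\ \cite[p.\,11]{HatcherWagoner}), and transferred from the smooth to the topological category by Hatcher (cf.\ \cite[p.\,7]{HatcherConcordance}).

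Given this, choose a path $s \mapsto F_s$ in $\cal{P}(M)$ with $F_0 = F$ and $F_1 = \rm{id}_{M \times I}$. Restricting to the top slice yields a continuous path $s \mapsto F_s|_{M \times \{1\}}$ in $\Homeo_\partial(M)$ from $g \circ f^{-1}$ to $\rm{id}_M$; this is precisely an isotopy exhibiting $g \circ f^{-1} = \rm{id}_M$ in $\pi_0\,\Homeo_\partial(M)$, hence $[f] = [g]$. The main effort of the argument is therefore entirely absorbed into the references above; the only obstacle is ensuring that the boundary conditions are preserved under restriction to the top slice, but this is immediate from the definition of $\cal{P}(M)$.
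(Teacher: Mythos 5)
Your proposal is correct and follows essentially the same route as the paper: surjectivity is definitional, and injectivity is exactly the statement that the space of concordances (homeomorphisms of $M \times I$ fixing a neighbourhood of $M \times \{0\} \cup \partial M \times I$) is path-connected, quoted from Cerf via Hatcher--Wagoner for the boundary/dimension-$5$ case and Hatcher for the topological category. The only cosmetic point is that the concordances should be taken to fix a \emph{neighbourhood} of $M \times \{0\} \cup \partial M \times I$ pointwise, so that restriction to the top slice lands in $\Homeo_\partial(M)$ as defined in the paper.
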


For $M$ as in \cref{thm:cerf} with $\partial M \neq \varnothing$, fix a disc $D^{d-1} \subset \partial M$ and write $\partial_0 M \coloneqq \partial M \setminus \rm{int}(D^{d-1})$. The inclusion induces a homomorphism \[\widetilde{\pi}_0\,\Homeo_\partial(M) \lra \widetilde{\pi}_0 \,\Homeo_{\partial_0 M}(M)\]
to the group of pseudoisotopy classes of homeomorphisms of $M$ fixing $\partial_0 M$ pointwise, which is an isomorphism by the Alexander trick. The right term can be studied by surgery theory; good references are \cite{Wall,Nicas} and see \cite[Section 2]{Krannich} for an overview. For a reference in the topological category in low dimensions, see \cite[Section 13.3]{FreedmanQuinn}. 

\begin{theorem}\label{thm:surgery} Let $M$ be as in \cref{thm:cerf}.
	\begin{enumerate}
		\item \label{enum:surgery-left} There is an exact sequence
		\[\pi_1\,\Map_*(M/\partial_0 M,\rm{BTop}) \lra \widetilde{\pi}_0\,\Homeo_{\partial_0}(M) \overset{\upsilon}\lra \pi_0\,\hAut_{\partial_0}(M).\]
		\item \label{enum:surgery-right} The image of $\upsilon$ has finite index in the subgroup of elements that fix the homotopy class of a classifying map $\tau^s_M \colon M \to B\rm{Top}$ relative to $\partial_0 M$.
	\end{enumerate}
\end{theorem}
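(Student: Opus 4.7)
The plan is to extract both parts from the topological surgery exact sequence for the $1$-connected pair $(M, \partial_0 M)$ of dimension $d \geq 5$, as developed in \cite{Wall, Nicas} and summarized in \cite[Section 2]{Krannich}, with the case $d=5$ handled via \cite[Section 13.3]{FreedmanQuinn} and \cite[Theorem 1.1]{Quinn}. This exact sequence relates the $\rm{TOP}$-structure set $\cal{S}^\rm{TOP}(M\,\rm{rel}\,\partial_0 M)$ to the normal-invariant space $\Map_*(M/\partial_0 M, G/\rm{Top})$ and to the $L$-groups $L_*(\bb{Z})$; in our $1$-connected topological setting, $L_d(\bb{Z})$ is a finitely generated abelian group ($\bb{Z}$, $0$, or $\bb{Z}/2$ according to $d \bmod 4$).

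For part (\ref{enum:surgery-left}), the map $\circled{1}$ is the forget-to-underlying-homotopy-class map, whose kernel consists of pseudoisotopy classes $[\phi]$ such that $\phi$ is homotopic to the identity rel $\partial_0 M$. Such classes are measured by a boundary map out of $\pi_1$ of the normal-invariant space. To match the $B\rm{Top}$ appearing in the theorem (rather than $G/\rm{Top}$), I would invoke the tangential reformulation of surgery: a loop of tangent microbundle classifying maps $M \to B\rm{Top}$ rel $\partial_0 M$ gives a microbundle automorphism and hence, through the tangential surgery dictionary, a pseudoisotopy class of self-homeomorphism that is homotopically trivial, with every such pseudoisotopy class arising this way. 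Exactness at $\widetilde{\pi}_0\,\Homeo_{\partial_0}(M)$ is then a direct translation of the surgery exact sequence.

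For part (\ref{enum:surgery-right}), a homotopy self-equivalence $f \colon (M, \partial_0 M) \to (M, \partial_0 M)$ is realized by a pseudoisotopy class of homeomorphism exactly when its surgery obstruction vanishes. This obstruction consists of (a) a normal invariant which, in the tangential formulation, records the class $f^*\tau^s_M - \tau^s_M \in [M/\partial_0 M, B\rm{Top}]$, and (b), once the normal invariant is trivial, a secondary obstruction in $L_d(\bb{Z})$. Since $L_d(\bb{Z})$ is a finitely generated abelian group, the image of $\circled{1}$ has finite index in the subgroup of $\pi_0\,\hAut_{\partial_0}(M)$ that fixes the homotopy class of $\tau^s_M$ rel $\partial_0 M$, as claimed.

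The main obstacle will be the careful reconciliation of the $B\rm{Top}$-based formulation in the statement with the $G/\rm{Top}$-based version in the standard topological surgery exact sequence, using the fibration $G/\rm{Top} \to B\rm{Top} \to BG$ together with a diagram chase to verify that the image in $\widetilde{\pi}_0\,\Homeo_{\partial_0}(M)$ is the same in both formulations. The remaining ingredients -- notably the topological surgery machinery in dimension $d=5$ via \cite{Quinn, FreedmanQuinn} -- are routine bookkeeping.
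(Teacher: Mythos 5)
Your sketch of part (\ref{enum:surgery-left}) is at the level of detail of the paper itself, which simply cites the proof of \cite[Theorem 2.2]{Krannich} (plus the equivalences $\hAut_\partial(M) \simeq \hAut_{\partial_0}(M,D^{d-1})$ and homotopy automorphisms $\simeq$ block homotopy automorphisms), so I have no real objection there. The problem is part (\ref{enum:surgery-right}), where your argument has a genuine gap. Your concluding step --- ``since $L_d(\bb{Z})$ is a finitely generated abelian group, the image of $\circled{1}$ has finite index'' --- is a non sequitur: $L_d(\bb{Z})$ can be $\bb{Z}$, so an obstruction valued there gives no finiteness whatsoever. Moreover the structure of the obstruction is not as you describe. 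In the rel-$\partial$ surgery exact sequence, if the structure $(M,f)$ has trivial normal invariant it lies in the orbit of the basepoint under the Wall realization action of $L_{d+1}(\bb{Z})$, and the deviation from being the basepoint is measured by a quotient of $L_{d+1}(\bb{Z})$ by the image of $[\Sigma(M/\partial M), G/\rm{Top}]$, which need not be finite. You have missed the point of the disc $D^{d-1}$ that was deleted from $\partial M$ (and of the Alexander trick): the paper works with the manifold triad $(M;\partial_0 M, D^{d-1})$ precisely so that the $\pi$-$\pi$ theorem applies, making the structure set $\cal{S}(M;\partial_0 M,D^{d-1})$ \emph{bijective} to the set of normal invariants; no $L$-groups appear at all, and the image of $\circled{1}$ is exactly the stabiliser of the basepoint for the precomposition action of $\pi_0\,\hAut_\partial(M)$ on $\pi_0\,\Map_*(M/\partial_0 M, G/\rm{Top})$.

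The second issue is that you have relegated to ``routine bookkeeping'' the very step that produces the finite index, namely the comparison of $G/\rm{Top}$ with $B\rm{Top}$. The subgroup in the statement is the stabiliser of $[\tau^s_M] \in \pi_0\,\Map_{\partial_0}(M,B\rm{Top})$, whereas the image of $\circled{1}$ is the stabiliser of the basepoint of $\pi_0\,\Map_*(M/\partial_0 M, G/\rm{Top})$; these differ, and the finite-index claim rests on the fact that $G/\rm{Top} \to B\rm{Top}$ is a map of infinite loop spaces of finite type which is a rational equivalence because the homotopy groups of $G$ are finite, so that by \cref{prop:rationalising-mapping-spaces} the induced map $\pi_0\,\Map_*(M/\partial_0 M,G/\rm{Top}) \to \pi_0\,\Map_*(M/\partial_0 M,B\rm{Top})$ is finite-to-one (and equivariant for precomposition). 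That finiteness of $\pi_*(G)$ --- not finite generation of $L_d(\bb{Z})$ --- is the actual source of the finite index, and it nowhere appears in your proposal. To repair your argument you should replace the rel-$\partial$ surgery sequence by the triad version with the $\pi$-$\pi$ identification of the structure set with normal invariants, and then carry out the $G/\rm{Top}$-versus-$B\rm{Top}$ comparison via the rational equivalence just described.
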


\begin{proof}A reference for \eqref{enum:surgery-left} is the proof of \cite[Theorem 2.2]{Krannich}, in particular in (21) of loc.cit.~and the identification on the top of page 16. One needs to use that the map $\hAut_{\partial}(M) \to \hAut_{\partial_0}(M,D^{d-1})$ to the homotopy automorphisms fixing $\partial_0 M$ pointwise and preserving $D^{d-1}$ setwise, is a weak equivalence, as are inclusions of homotopy automorphisms into block homotopy automorphisms.
	
\smallskip

For \eqref{enum:surgery-right} we recall some facts from surgery theory. Let $\cal{S}(M;\partial_0 M,D^{d-1})$ denote the \emph{structure set} of the triad $(M,\partial_0 M,D^{d-1})$: it consists of equivalence classes of (simple) homotopy equivalences of triads $\phi \colon (W;\partial_0 W,\partial_1 W) \to (M;\partial M \setminus \rm{int}(D^{d-1}),D^{d-1})$ that restrict to a homeomorphism on $\partial_0 W \to \partial M \setminus \rm{int}(D^{d-1})$, and two such homotopy equivalences $\phi$ and $\phi'$ are equivalent if and only if there is a homeomorphism $f$ such that there is a homotopy $\phi' \circ f \sim \phi$ relative to $\partial M \setminus \rm{int}(D^{d-1})$. 

The group $\pi_0\,\hAut_\partial(M)$ acts on the structure set by precomposition, and image of $\upsilon$ consists of the stabiliser of the identity, which we regard at the basepoint of the structure set. By the $\pi$-$\pi$-theorem the normal invariant map
\[\cal{S}(M;\partial_0 M,D^{d-1}) \lra \cal{N}(M;\partial_0 M,D^{d-1})\]
is a bijection of pointed sets. This target is known as the \emph{set of normal invariants} and can be studied using maps
\[\begin{tikzcd}\cal{N}(M;\partial_0 M,D^{d-1}) \dar{\simeq} & \pi_0\,\Map_{\partial_0}(M,B\rm{Top}) \dar{\simeq} \\[-5pt]
	\pi_0\,\Map_*(M/\partial_0 M,G/\rm{Top}) \rar & \pi_0\,\Map_*(M/\partial_0 M,B\rm{Top}), \end{tikzcd}\]
as in the middle of page 16 of loc.cit. Under these identifications the basepoint of the set $\cal{N}(M;\partial_0 M,D^{d-1})$ is mapped to the path component of $\Map_{\partial_0}(M,B\rm{Top})$ of a classifying map $\tau^s_M$ of the stable tangent bundle $T^s M$ relative to $\partial_0 M$, and the action of $\pi_0\,\hAut_\partial(M)$ is on $\pi_0\,\Map_{\partial_0}(M,B\rm{Top})$ is by precomposition.

The map $G/\rm{Top} \to B\rm{Top}$ is one between infinite loop spaces of finite type, and since the homotopy groups of $G$ are finite it induces an equivalence $(G/\rm{Top})_\bb{Q} \to B\rm{Top}_\bb{Q}$. As a consequence of \cref{prop:rationalising-mapping-spaces}, the map  $\pi_0\,\Map_*(M/\partial_0 M,G/\rm{Top}) \to \pi_0\,\Map_*(M/\partial_0 M,B\rm{Top})$ at the bottom of the diagram is finite-to-one. This finishes the proof of \eqref{enum:surgery-right}.
\end{proof}

\subsection{The proof of \cref{athm:main}} The class of groups of finite type is closed under the following constructions, by \cite[Proposition 2.5]{DrorDwyerKan}:

\begin{lemma}\label{lem:finite-type-constr} The following constructions yield groups of finite type:
	\begin{enumerate}[\noindent (1)]
		\item \label{enum:ft-finite-index} Passing to finite index subgroups.
		\item \label{enum:ft-extensions} Extensions of groups of finite type by groups of finite type.
		\item \label{enum:ft-quotients} Quotients groups of finite type by groups of finite type.
\end{enumerate}
\end{lemma}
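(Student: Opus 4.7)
The plan is to translate each operation on groups into the corresponding operation on classifying spaces and verify that the CW finite-type condition passes through; each of the three closure properties is recorded in \cite[Proposition 2.5]{DrorDwyerKan}. For \eqref{enum:ft-finite-index}, if $H \leq G$ has finite index $n$, then $BH \to BG$ is an $n$-sheeted covering, so pulling back a finite-type CW structure on $BG$ gives $BH$ a CW structure in which each $k$-cell of $BG$ lifts to exactly $n$ many $k$-cells of $BH$; hence $BH$ still has finitely many cells in each dimension.

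For \eqref{enum:ft-extensions}, the extension $1 \to N \to G \to Q \to 1$ realises as a fibre sequence $BN \to BG \to BQ$ with $Q$ acting on the fibre through its conjugation action on $N$. Starting from finite-type CW models of $BN$ and $BQ$, one builds a finite-type CW model of $BG$ via the twisted bar (or Borel) construction $EQ \times_Q BN$, whose cells are indexed by $Q$-orbits of cells in $EQ$ crossed with cells of $BN$; there are only finitely many of these in each dimension.

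For \eqref{enum:ft-quotients}, given the same extension one must now build a finite-type CW model of $BQ = K(G/N,1)$ from finite-type models of $BN$ and $BG$. The hard part will be this step: one proceeds skeleton-by-skeleton, using the Hochschild--Serre spectral sequence $H_*(Q;H_*(BN)) \Rightarrow H_*(BG)$ to control at each stage the finitely many generators and relations required to realise $G/N$ as $\pi_1$ and to kill higher homotopy. Ensuring that finite generation of the relevant homology over $\bb{Z}[G]$ descends to finite generation over $\bb{Z}[Q]$ requires the careful homological analysis carried out in \cite[Proposition 2.5]{DrorDwyerKan}, which is why we invoke the reference rather than reproving it here.
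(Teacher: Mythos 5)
Your proposal matches the paper: the paper offers no argument of its own and simply cites \cite[Proposition 2.5]{DrorDwyerKan}, which is also the backbone of your write-up, and your covering-space argument for (1) is the standard complete proof. One caveat on your sketch for (2): the model $EQ \times_Q BN$ presupposes an honest cellular $Q$-action on a finite-type CW model of $BN$ inducing the conjugation action, but in general only an \emph{outer} action of $Q$ on $N$ is available, so such an action need not exist on the nose; the robust version of your idea is to use the fibration $BN \to BG \to BQ$ and induct over the skeleta of a finite-type model of $BQ$, noting that passing from the preimage of the $(k-1)$-skeleton to that of the $k$-skeleton attaches, up to homotopy, finitely many copies of $D^k \times BN$ along $S^{k-1} \times BN$, which contributes only finitely many cells in each dimension. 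Since both you and the paper ultimately rest (2) and (3) on the cited reference, this does not affect the correctness of the proposal.
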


Moreover, it includes finite groups, finitely generated nilpotent groups, and arithmetic groups by \cite[Examples 2.7, 2.8]{DrorDwyerKan}. The first of these, combined with all parts of \cref{lem:finite-type-constr}, implies that a group that is commensurable up to finite kernel to a group of finite type, is also of finite type. We first prove the following special case of \cref{athm:main} (or rather, \cref{rem:mainthm} \eqref{enum:stronger}) where the boundary is assumed path-connected:

\begin{theorem}\label{thm:ctd-case} If $M$ is a compact topological manifold of dimension $\geq 5$ and $M$ as well as $\partial M$ is $1$-connected, then $\pi_0\, \Homeo_\partial(M)$ is commensurable up to finite kernel to an extension of an arithmetic group by a finitely generated abelian group. In particular, it is of finite type.\end{theorem}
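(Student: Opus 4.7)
My plan is to combine the surgery-theoretic reduction of \cref{thm:cerf,thm:surgery} with the algebraic output of \cref{cor:haut-fixing-cohomology}, and then close up using Borel's lemma \cref{cor:borel-lemma}.

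First, I would use \cref{thm:cerf} together with the Alexander trick to identify $\pi_0\,\Homeo_\partial(M)\cong \widetilde{\pi}_0\,\Homeo_{\partial_0}(M)$, and feed this into the surgery exact sequence of \cref{thm:surgery} \eqref{enum:surgery-left} to obtain a short exact sequence
\[1 \lra N \lra \widetilde{\pi}_0\,\Homeo_{\partial_0}(M) \lra Q \lra 1,\]
where $N$ is the image of $\pi_1\,\Map_*(M/\partial_0 M,B\rm{Top})$ under the connecting map and $Q$ is the image of $\circled{1}$. The kernel $N$ is finitely generated abelian: it is a quotient of $\pi_1\,\Map_*(M/\partial_0 M,B\rm{Top})$, which is finitely generated by obstruction theory (as $M/\partial_0 M$ is a finite CW-complex and $B\rm{Top}$ has finitely generated homotopy groups in each degree) and abelian (as $B\rm{Top}$ is an $H$-space via block sum). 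For the quotient $Q$, \cref{thm:surgery} \eqref{enum:surgery-right} places $Q$ with finite index in the stabiliser of $[\tau^s_M]$ inside $\pi_0\,\hAut_{\partial_0}(M)$. Since $(B\rm{Top})_\bb{Q} \simeq \prod_{i\geq 1}K(\bb{Q},4i)$ by Kirby--Siebenmann, the rationalised class $[r\tau^s_M]$ is captured by finitely many rational Pontryagin classes $h=(p_1,\ldots,p_r)$ in $H^*(M,\partial_0 M;\bb{Q})$. The finite-to-one property of \cref{prop:rationalising-mapping-spaces} places this stabiliser with finite index in $\pi_0\,\hAut_{\partial_0}(M)_h$, and \cref{cor:haut-fixing-cohomology} identifies the latter as a finite-kernel extension of an arithmetic subgroup of the linear algebraic group $R \coloneqq \pi_0\,\hAut^{(\partial_0 M)_\bb{Q}/}(M_\bb{Q})_h$.

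Finally, I would assemble via Borel's lemma. Setting $k \coloneqq \operatorname{rank}(N)$, the rationalisation $N\otimes_\bb{Z}\bb{Q}$ is naturally the $\bb{Q}$-points of the unipotent linear algebraic group $\bb{G}_a^{k}$. Together with the conjugation action of $Q$ on $N$ --- which I would argue is induced by an algebraic action of $R$ on $\bb{G}_a^{k}$ --- this fits the short exact sequence above into the $\bb{Q}$-points of a split exact sequence of linear algebraic groups $1\to \bb{G}_a^{k}\to R\ltimes\bb{G}_a^{k}\to R\to 1$. \cref{cor:borel-lemma} then produces a homomorphism $\pi_0\,\Homeo_\partial(M)\to (R\ltimes\bb{G}_a^{k})(\bb{Q})$ with finite kernel and arithmetic image, and \cref{lem:arith-levi-char} exhibits any arithmetic subgroup of $R\ltimes\bb{G}_a^{k}$ as an extension of an arithmetic subgroup of $R$ by a lattice in $\bb{G}_a^{k}(\bb{Q})=\bb{Q}^{k}$, i.e.~a finitely generated abelian group. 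The main obstacle will be verifying the algebraic-action hypothesis, namely that the conjugation action of $Q$ on $N$ is the restriction to $\bb{Q}$-points of a morphism $R\to\rm{GL}_k$ of linear algebraic groups; I expect this to follow from a rational-homotopy-theoretic naturality argument in the spirit of the proof of \cref{prop:action-algebraic}, applied to the rationalised surgery exact sequence.
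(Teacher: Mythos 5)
Your first two paragraphs are essentially the paper's own proof, and they already suffice for the stated theorem: Cerf plus the Alexander trick identify $\pi_0\,\Homeo_\partial(M)$ with $\widetilde{\pi}_0\,\Homeo_{\partial_0}(M)$; \cref{thm:surgery} exhibits this as an extension of $Q$ (which has finite index in the stabiliser of $\tau^s_M$) by a quotient $N$ of $\pi_1\,\Map_*(M/\partial_0 M,B\rm{Top})$, which is finitely generated abelian; and the stabiliser maps with finite kernel onto an arithmetic group by the finite-to-one part of \cref{prop:rationalising-mapping-spaces}, the equivalence $B\rm{Top}_\bb{Q}\simeq\prod_{i\geq 1}K(\bb{Q},4i)$, and \cref{cor:haut-fixing-cohomology} applied to the cofibration $\partial M\to M$ with the classes $p_i(T^s M)$ for $4i<d$ (transferred via $H^*(M,\partial M;\bb{Q})\cong H^*(M,\partial_0 M;\bb{Q})$ in degrees $<d$). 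That is exactly where the paper stops: the claimed commensurability up to finite kernel is read off from this extension.

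The genuine problem is your third paragraph, which is also the step through which your write-up actually routes the conclusion. What you attempt there --- realising the whole group, up to finite kernel, as an arithmetic subgroup of a single linear algebraic group $R\ltimes\bb{G}_a^k$ --- is the paper's open \cref{ques:arithmetic}, not the theorem, and as written it fails at two points beyond the algebraicity of the action, which you yourself flag as unverified. First, \cref{cor:borel-lemma} does not \emph{produce} a homomorphism on the middle terms: a full map of short exact sequences is part of its hypotheses, so you would have to construct $\widetilde{\pi}_0\,\Homeo_{\partial_0}(M)\to R(\bb{Q})\ltimes\bb{Q}^k$ compatible with $N\to N\otimes\bb{Q}$ and $Q\to R(\bb{Q})$ yourself. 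Second, the target $(R\ltimes\bb{G}_a^k)(\bb{Q})$ is a \emph{split} extension of $R(\bb{Q})$ by $\bb{Q}^k$, so such a compatible map exists only if the pushforward along $N\to N\otimes\bb{Q}$ of the class of the extension $1\to N\to\widetilde{\pi}_0\,\Homeo_{\partial_0}(M)\to Q\to 1$ vanishes in $H^2(Q;\bb{Q}^k)$; nothing in your argument controls this class, and the surgery-theoretic extension has no reason to split rationally. So either delete the third paragraph and conclude directly from the extension in your first paragraph (the paper's route), or recognise that what you are trying to prove there is strictly stronger than the theorem and would require a new idea, not just naturality.
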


\begin{proof}By \cref{thm:surgery}, it suffices to prove that \begin{enumerate}[(i)]
		\item \label{enum:proof-left} $\pi_1\,\Map_*(M/\partial_0 M,B\rm{Top})$ is a finitely generated abelian group and 
		\item \label{enum:proof-right} the subgroup $\pi_0\,\hAut_\partial(M)_{\tau^s_M} \subseteq \pi_0\,\hAut_\partial(M)$ of homotopy classes of homotopy automorphisms that fix $\tau^s_M$ is an arithmetic group.
\end{enumerate}
For \eqref{enum:proof-left}, we use that $B\rm{Top}$ is an infinite loop space of finite type and so by \cref{prop:rationalising-mapping-spaces} each path component of $\Map_*(M/\partial_0 M,B\rm{Top})$ is nilpotent of finite type. This proves that $\pi_1\,\Map_*(M/\partial_0 M,B\rm{Top})$ is a finitely generated nilpotent group. Moreover, since each path component of $\Map_*(M/\partial_0 M,B\rm{Top})$ is an H-space, this must in fact be abelian.

For \eqref{enum:proof-right}, we use that composition with the rationalisation $r \colon B\rm{Top} \to B\rm{Top}_\bb{Q}$ yields a map 
\[\pi_0\,\Map_{\partial_0 M}(M,B\rm{Top}) \lra \pi_0\,\Map_{\partial_0 M}(M,B\rm{Top}_\bb{Q})\]
which is finite-to-one as a consequence of \cref{prop:rationalising-mapping-spaces}. Thus $\pi_0\, \hAut_\partial(M)_{\tau^s_M}$ has finite index in $\pi_0\, \hAut_\partial(M)_{r \tau^s_M}$. Now we use that there is a homotopy equivalence
\[B\rm{Top}_\bb{Q} \overset{\simeq}\lra \prod_{i=1}^\infty K(\bb{Q},4i)\] 
given by the rational Pontryagin classes. Thus an element of $\pi_0\, \hAut_\partial(M)$ stabilises the map $r\tau^s_M$ if and only if it stabilises the pullback of each $p_i$ to $H^{4i}(M,\partial_0 M;\bb{Q})$. Since the inclusion induces an isomorphism $H^*(M,\partial M;\bb{Q}) \cong H^*(M,\partial_0M;\bb{Q})$ in degrees $*<d$, while the right term vanishes for $* \geq d$, this is equivalent to stabilising $p_i(T^s M) \in H^{4i}(M,\partial M;\bb{Q})$ for all $4i<d$. At this point we invoke \cref{cor:haut-fixing-cohomology} with cofibration $A \to X$ given by the inclusion $\partial M \to M$ and rational cohomology classes $h_1,\ldots,h_r$ given by the Pontryagin classes $p_i(T^s M)$ for $4i<d$.

To conclude $\pi_0\, \Homeo_\partial(M)$ is of finite type, use the remarks preceding this proofs and \cref{lem:finite-type-constr} \eqref{enum:ft-extensions}.
\end{proof}

To prove \cref{athm:main} and later \cref{cor:nontrivial-pi1}, we need some geometric tools for topological manifolds that are well-known for smooth manifolds: isotopy extension \cite[Theorem 6.17]{Siebenmann}, immersion theory \cite{Lees}, general position \cite{Dancis}, the existence of normal microbundles for submanifolds of dimension $\leq 1$ \cite[Section 9.3, 9.4]{FreedmanQuinn}, and that $\pi_i \, \rm{Top}(d)$ is finite for $i \leq 2$ and finitely generated for $i=3$ \cite[Section 8.3, 8.7]{FreedmanQuinn}. 

\begin{proof}[Proof of \cref{athm:main}] The argument is an induction over the number $s$ of path components of $\partial M$, which is finite because $M$ is compact. The initial case $s=1$ is \cref{thm:ctd-case}. For the induction step for $s$ to $s+1$ we will perform a surgery on the boundary. We fix discs $D^{d-1} \times \{0\}$ and $D^{d-1} \times \{1\}$ in two different path components of $\partial M$ and consider the space $\rm{Emb}^\rm{Top}_\partial(D^{d-1} \times I,M)$ of topological embeddings restricting to the fixed discs on $D^{d-1} \times \{0,1\}$. By general position, the map
	\[\rm{Emb}^\rm{Top}_\partial(D^{d-1} \times I,M) \lra \rm{Imm}^\rm{Top}_\partial(D^{d-1} \times I,M)\]
is $2$-connected when $d \geq 5$. By immersion theory, the latter fits in a fibration sequence
\[\Omega \rm{Top}(d) \lra \rm{Imm}^\rm{Top}_\partial(D^{d-1} \times I,M) \lra \Map_\partial(I,M).\]
Both outer terms are non-empty with finitely many path components: the left term because $\pi_i\,\rm{Top}(d)$ is finite for $i \leq 2$ when $d \geq 6$ and the right term because $M$ is 1-connected. Hence the same is true for the middle term. Similarly, $\pi_1\, \rm{Imm}^\rm{Top}_\partial(D^{d-1} \times I,M)$ is a finitely generated nilpotent group. Picking an embedding $e \colon D^{d-1} \times I \to M$, we obtain a fibration sequence
\[\Homeo_\partial(M') \lra \Homeo_\partial(M) \lra \rm{Emb}^\rm{Top}_\partial(D^{d-1} \times I,M)\]
with $M' \coloneqq M \setminus \rm{int}(\rm{im}(e))$. This implies that a finite index subgroup of $\pi_0\,\Homeo_\partial(M)$ is isomorphic to the quotient of $\pi_0\,\Homeo_\partial(M')$ by the image of $\pi_1 \,\rm{Emb}^\rm{Top}_\partial(D^{d-1} \times I,M)$. On the one hand, since the boundary of $M'$ has $s$ path components, by the induction hypothesis $\pi_0\,\Homeo_\partial(M')$ is of finite type. On the other hand, any quotient of $\pi_1 \,\rm{Emb}^\rm{Top}_\partial(D^{d-1} \times I,M)$ is a finitely generated nilpotent group and hence of finite type. By \cref{lem:finite-type-constr} \eqref{enum:ft-quotients}, the quotient is of finite type, and by \cref{lem:finite-type-constr} \eqref{enum:ft-finite-index} and \eqref{enum:ft-extensions} so is $\pi_0\,\Homeo_\partial(M)$.
\end{proof}

\subsection{Proofs of some remarks}\label{sec:remarks}
Next, we make precise \cref{rem:mainthm} \eqref{enum:connectivity}, which said that in \cref{athm:main} we could drop the assumption that the path components of $\partial M$ are $1$-connected at the expense of additional conditions on $M$.

\begin{corollary}\label{cor:nontrivial-pi1} If $M$ is a compact topological manifold of dimension $d \geq 7$, $M$ is $1$-connected, and $\pi_2\,M$ is finite, then $\pi_0\,\Homeo_\partial(M)$ is of finite type.
\end{corollary}

\begin{proof}We prove this in the case that $\partial M$ is $0$-connected; the general case is done exactly as in the proof of \cref{athm:main} by induction over the number of path components. 
	
Since $\partial M$ is compact, its fundamental group is finitely generated. We do an induction over the number $s$ of generators, the initial case being a special case of \cref{thm:ctd-case}. The strategy for the induction step from $s$ to $s+1$ is similar to the proof of \cref{athm:main}, but using discs to perform a surgery on a thickened loop rather than intervals to perform a surgery on two discs.
	
We represent a generator by an embedding $\gamma \colon S^1 \to \partial M$, which exists by general position. This has a normal microbundle, which must be trivial because $\gamma$ is null-homotopic, so we can extend it to an embedding $\overline{\gamma} \colon D^{d-2} \times S^1 \to \partial M$.	By general position, the map
	\[\rm{Emb}_\partial^\rm{Top}(D^{d-2} \times D^2,M) \lra \rm{Imm}_\partial^\rm{Top}(D^{d-2} \times D^2,M)\]
where the boundary conditions are given by $\overline{\gamma}$, is $2$-connected when $d \geq 7$. As above, there is a fibration sequence
\[\Omega^2 \rm{Top}(d) \lra \rm{Imm}_\partial^\rm{Top}(D^{d-2} \times D^2,M) \lra \rm{Map}_{\partial D^2}(D^2,M).\]
Both outer terms are non-empty with finitely many path components: the left term as above and the right term because the set of null-homotopies of $\gamma$ is a torsor for $\pi_2\,M$. Similarly, $\pi_1 \, \rm{Imm}_\partial^\rm{Top}(D^{d-2} \times D^2,M)$ is a finitely generated nilpotent group. Picking an embedding $e \colon D^{d-2} \times D^2 \to M$ extending $\overline{\gamma}$, we obtain a fibration sequence
\[\Homeo_\partial(M') \lra \Homeo_\partial(M) \lra \rm{Emb}^\rm{Top}_\partial(D^{d-2} \times D^2,M)\]
with $M' \coloneqq M \setminus \rm{int}(\rm{im}(e))$. Observing that $M'$ also satisfies the hypothesis of this corollary but $\partial M'$ has one less generator, the induction hypothesis yields that $\pi_0\,\Homeo_\partial(M')$ is of finite type. Arguing as in the proof of \cref{athm:main}, we conclude from this that $\pi_0\,\Homeo_\partial(M)$ is also of finite type.
\end{proof}

Next, we explain \cref{rem:mainthm} \eqref{enum:smooth}, which claimed that \cref{athm:main} also holds for diffeomorphisms and PL homeomorphisms. If we suppose that $M$ admits a smooth structure, then using smoothing theory as in the proof of \cite[Proposition 2.5]{BustamanteKrannichKupers} one proves that in dimension $\geq 5$ the homomorphism
\[\pi_0\,\Diff_\partial(M) \lra \pi_0\,\Homeo_\partial(M)\]
has finite kernel and its image has finite index. Thus the result in \cref{athm:main} for homeomorphisms implies the result for diffeomorphisms, as does that in \cref{cor:nontrivial-pi1}. A similar argument, using triangulation theory rather than smoothing theory, works for PL homeomorphisms of a PL manifold.

\begin{remark}General position results are more refined for smooth manifolds than for topological manifolds, which allows one to extend \cref{cor:nontrivial-pi1} to $d \geq 6$ for diffeomorphisms. We leave the details to the interested reader.
\end{remark}

Finally, we explain \cref{rem:mainthm} \eqref{enum:dim4}, which claimed that \cref{athm:main} is true in dimension 4. If the path components of $\partial M$ are $1$-connected they are homeomorphic to $S^3$ by the now-proven Poincar\'e conjecture and so can be filled with $D^4$'s to obtain a closed 1-connected 4-manifold $M'$. Consider now the fibration sequence \[\Homeo_\partial(M) \lra \Homeo(M') \lra \rm{Emb}^\rm{Top}(\sqcup_k D^4,M')\]
with fibre taken over the canonical inclusion $\sqcup_k D^4 \hookrightarrow M'$ of the discs we used to fill the boundary components. Evaluating an embedding $\sqcup_k D^k$ at the set of origins yields a map
\[\rm{Emb}^\rm{Top}(\sqcup_k D^4,M') \lra \rm{Conf}_k(M').\]
By isotopy extension, this map is a fibration and its fibre over a configuration is the subspace of $\rm{Emb}^\rm{Top}(\sqcup_k D^4,M')$ of those embeddings sending the set of origins to this configuration. Using a 1-parameter family of embeddings $\sqcup_k D^4 \to \sqcup_k D^4$ starting at the identity with increasingly small image, one proves that this deformation retracts onto the subspace of those embeddings sending each $D^k$ into a small neighbourhood around the corresponding point in the configuration, homeomorphic to $D^4$. This can be identified with $\rm{Emb}^\rm{Top}(D^4,D^4)^k$ and is equivalent to $\rm{Top}(4)$ using the fact that $D^4$ is isotopy equivalent to $\bb{R}^4$ and Kister's theorem \cite{Kister}. The upshot is that we have a fibre sequence
\[\rm{Top}(4)^k \lra \rm{Emb}^\rm{Top}(\sqcup_k D^4,M') \lra \rm{Conf}_k(M'),\]
which can be combined with the fact $\pi_i\,\rm{Top}(4)$ is finite for $i \leq 1$ as a consequence of \cite[Section 8.7]{FreedmanQuinn}, to conclude that $\pi_i\,\rm{Emb}^\rm{Top}(\sqcup_k D^4,M')$ is finite for $i \leq 1$. Hence $\pi_0\,\Homeo_\partial(M) \to \pi_0\,\Homeo(M')$ has finite kernel and its image has finite index. The result now follows from \cite[Theorem 1.1]{Quinn}, which says that $\pi_0\,\Homeo(M')$ is arithmetic and hence of finite type, combined with \cref{lem:finite-type-constr} \eqref{enum:ft-finite-index} and \eqref{enum:ft-extensions}. This result also follows from \cite{OrsonPowell}.


\bibliographystyle{amsalpha}
\bibliography{./refs}

\bigskip

\end{document}